\newtheorem{theorem}{Theorem}[section]
\newtheorem{lemma}{Lemma}[section]
\newtheorem{corollary}{Corollary}[section]
\newtheorem{definition}{Definition}[section]
\newtheorem{remark}{Remark}[section]
\newtheorem{remarks}{Remarks}[section]
\newtheorem{heuristic}{Heuristic}[section]
\newtheorem{conjecture}{Conjecture}[section]
\numberwithin{equation}{section}
  \def\N{\mathbb{N}}
  \def\Q{\mathbb{Q}}
  \def\Z{\mathbb{Z}}
  \def\F{\mathbb{F}}
 \def\zz{\mathbb{Z}}
\def\virg{\raise 2pt \hbox{,}\,\,}
\def\plus{\ds\mathop{\raise 2.0pt \hbox{$\bigoplus $}}\limits}
\def\mult{\ds\mathop{\raise 2.0pt \hbox{$\bigotimes$}}\limits}
\def\prd{ \ds\mathop{\raise 2.0pt \hbox{$  \prod   $}}\limits}
\def\Cap{ \ds\mathop{\raise 2.0pt \hbox{$\bigcap   $}}\limits}
\def\Cup{ \ds\mathop{\raise 2.0pt \hbox{$\bigcup   $}}\limits}
\def\sm{  \ds\mathop{\raise 2.0pt \hbox{$  \sum    $}}\limits}
\let\ds=\displaystyle
\def \tensorZp{\otimes{\raise -0.8pt \hbox{\!\!$_{_{\zz_{\!p}}}$}}}
\def \tensorZ{\otimes{\raise -0.8pt \hbox{\!\!$_{_{\zz}}$}}}
\theoremstyle{plain}
\begin{document}

\title[On the order modulo $p$ of an algebraic number]{On the order modulo $p$ \\ of an algebraic number}
 
\author[Georges {\sc Gras}]{{\sc Georges} GRAS}

\address{Georges {\sc Gras}, 
Villa la Gardette, chemin Ch\^ateau Gagni\`ere, 38520 Le Bourg d'Oisans} 
\email{g.mn.gras@wanadoo.fr} 
\urladdr{\url{http://www.researchgate.net/profile/Georges_Gras}}

\keywords{algebraic numbers; order modulo $p$; Frobenius automorphisms; probabilistic number theory}

\subjclass{Primary 11R04; Secondary 11R16}
\thanks{Some very fast programs, written in this paper, were suggested by Bill Allombert
to obtain the divisors of $p^2-1$ and Euler function computations when $p$ is very large.
I thank him very warmly for his advices and many improvements of my first ``basic'' programs.}

\begin{abstract}
Let $K/\Q$ be Galois, and let $\eta\in K^\times$ 
whose conjugates are multiplicatively independent. 
For a prime $p$, unramified, prime to $\eta$, let $n_p$ be the residue degree 
of $p$ and $g_p$ the number of ${\mathfrak p} \vert p$, then let $o_{\mathfrak p}(\eta)$ 
and $o_p(\eta)$ be the orders of $\eta$ modulo ${\mathfrak p}$ and $p$, respectively.
Using Frobenius automorphisms, we show that for all $p \gg 0$, 
some explicit divisors of $p^{n_p}-1$ cannot realize $o_{\mathfrak p}(\eta)$ nor $o_p(\eta)$,
and we give a lower bound of $o_p(\eta)$.
Then we obtain that, for all $p \gg 0$ such that $n_p >1$, 
${\rm Prob}(o_p(\eta)<p) \leq \frac{1}{p^{g_p\,(n_p-1) - \varepsilon}}$, where  $\varepsilon = 
O \big( \frac{1}{{\rm log}_2(p)}\big)$;  under the Borel--Cantelli heuristic, 
this leads to $o_p(\eta)>p$ for all $p \gg 0$ such that $g_p(n_p-1) \geq 2$, which covers the
``limit'' cases of cubic fields with $n_p=3$ and quartic fields with $n_p=g_p=2$, but not the 
case of quadratic fields with $n_p=2$. In the quadratic case, the natural conjecture is, 
on the contrary, that $o_p(\eta) < p$ for infinitely many inert $p$. 
Some computations are given with PARI programs.
\end{abstract}

\begin{altabstract} Soit $K/\Q$ Galoisienne, et soit $\eta\in K^\times$ de conjugu\'es 
multiplicativement ind\'ependants.
Pour un premier $p$, non rami\-fi\'e, \'etranger \`a $\eta$, soient
$n_p$ le degr\'e r\'esiduel de $p$ et $g_p$ le nombre de ${\mathfrak p} \vert p$,
puis $o_{\mathfrak p}(\eta)$ et $o_p(\eta)$ les ordres de $\eta$ modulo 
${\mathfrak p}$ et $p$ respectivement.
En utilisant les automorphismes de Frobenius, nous montrons que pour tout $p \gg 0$,
certains diviseurs explicites de $p^{n_p}-1$ ne peuvent r\'ealiser ni $o_{\mathfrak p}(\eta)$ ni $o_p(\eta)$,
et nous donnons une borne inf\'erieure de $o_p(\eta)$.
Ensuite nous obtenons que ${\rm Prob}(o_p(\eta)<p) \leq \frac{1}{p^{g_p\,(n_p-1) - \varepsilon}}$,
o\`u $\varepsilon = O \big( \frac{1}{{\rm log}_2(p)}\big)$, pour tout $p \gg 0$ tel que $n_p >1$~; 
sous l'heuristique de Borel--Cantelli, ceci conduit \`a
$o_p(\eta)>p$ pour tout $p \gg 0$ tel que $g_p(n_p-1) \geq 2$, ce qui couvre les cas ``limites''
des corps cubiques avec $n_p=3$ et des corps quartiques avec $n_p=g_p=2$,
mais non celui des corps quadratiques avec $n_p=2$. Dans le cas quadratique, la conjecture 
naturelle est, au contraire, que  $o_p(\eta) < p$ pour une infinit\'e de $p$ inertes. 
Des calculs sont donn\'es via des programmes PARI.
\end{altabstract}

\maketitle

\section{Frobenius automorphisms} \label{sect1}
\subsection{Generalities} 
Let $K/\Q$ be Galois of degree $n$, of Galois group~$G$.  
Denote by $h$ a possible residue degree of an unramified prime ideal of $K$, that is to say a divisor of $n$ 
for which there exists a {\it cyclic} subgroup $H$ of $G$ of order $h$. Indeed, one knows that, for any generator 
$s$ of $H$, there exist infinitely many prime numbers $p$, unramified in $K/\Q$, such that $s$ is the Frobenius 
$s_{\mathfrak p}$ of a prime ideal ${\mathfrak p} \mid p$ in $K/\Q$~; $H$ is then the 
decomposition group $H_{\mathfrak p}$ of ${\mathfrak p}$.
Reciprocally, any unramified ${\mathfrak p}$ has a cyclic decomposition group $H_{\mathfrak p}$ 
with a canonical generator $s_{\mathfrak p}$ (the Frobenius). 

\medskip
Of course, if $s_1$ and $s_2$ are two distinct generators of $H$, the sets of corresponding primes 
$p$ are disjoint (e.g., take the cyclotomic field $K=\Q(\zeta_5)$ of fifth roots of unity and $H=G$
(cyclic of order $h=4$), with $\zeta_5^{s_1} = \zeta_5^2$ and $\zeta_5^{s_2} = \zeta_5^3$; this characterizes the 
sets $\{p: \   p \equiv 2 \pmod 5\}$ and $\{p : \   p \equiv 3 \pmod 5\}$, respectively); see, e.g., 
 \cite[Section 3 of Chapter 7]{Nar}, \cite[Section 3 of Appendix]{Wa}, or 
 \cite[Sections 1.1, 1.2, 4.6, of Chapter II]{Gr3} for Chebotarev's density theorem 
 and properties of Frobenius automorphisms. 

\smallskip
But we consider such a fixed residue degree $h \mid n$ since we shall see that the statement of 
our main result, on the order of $\eta\in K^\times$ modulo a prime ideal 
${\mathfrak p}$, does not depend on the conjugate of the decomposition group $H_{\mathfrak p}$ 
of ${\mathfrak p}$, nor on its Frobenius $s_{\mathfrak p}$, but only on the residue degree $n_p$ 
of the corresponding prime number $p$ under ${\mathfrak p}$ (in other words, we shall 
classify the set of unramified prime ideals ${\mathfrak p}$ of $K$ by means of the sole criterion $n_p=h$; 
so, any of the ${\mathfrak p}$, with $n_p=h$, would have the common property, depending on $h$, 
given by our main theorem). 

\smallskip
Since the $h \mid n$ are finite in number, everything is effective (e.g., $h\in \{1, 2, 3\}$ for a dihedral 
group $G=D_6$, but $h\in \{1, 2\}$ for the Galois group of any compositum of quadratic fields).

\subsection{Orders modulo ${\mathfrak p}$ and modulo $p$}\label{order}
Let $\eta \in K^\times$. In the sequel we shall assume that  the multiplicative $\Z[G]$-module 
$\langle \eta \rangle_G^{}$ generated by $\eta$ is of $\Z$-rank $n$ 
(i.e., $\langle \eta \rangle_G^{} \otimes \Q \simeq \Q[G]$), 
but this is not needed for the following definition.

\begin{definition} \label{ordres} {\rm Let $p$ be a prime number, unramified in $K/\Q$, prime to $\eta$,
and let ${\mathfrak p}$ be a prime ideal of $K$ dividing $p$.

\smallskip \noindent
We define the {\it order of $\eta$ modulo ${\mathfrak p}$} (denoted $o_{\mathfrak p}(\eta)$) 
to be the least nonzero integer $k$ such that $\eta^k \equiv 1 \pmod {\mathfrak p}$. 

\smallskip \noindent
We define the {\it order of $\eta$ modulo $p$} (denoted $o_p(\eta)$) to be the least 
nonzero integer $k$ such that $\eta^k \equiv 1 \pmod p$. }
\end{definition}

\noindent
Of course, $o_{\mathfrak p} (\eta)$ and $o_p (\eta) = 
{\rm lcm\,}(o_{\mathfrak p} (\eta), \,{\mathfrak p} \mid p)$ divide $p^{n_p} - 1$, 
where $n_p$ is the residue degree of $p$ in $K/\Q$,
but we intend to prove (see Theorem \ref{thm} for a more complete and general statement):

\medskip\noindent
{\it Let $h \mid n$ be a possible residue degree in $K/\Q$. Let $\eta \in K^\times$ be such that the 
multi\-plicative $\Z[G]$-module generated by $\eta$ is of $\Z$-rank $n$. 
Then for all large enough prime $p$ (denoted $p \gg 0$ in all the paper)
with residue degree $n_p = h$, the orders $o_{\mathfrak p} (\eta)$ for any 
${\mathfrak p}\mid p$, and $o_p (\eta)$ do not divide 
any of the integers 

\centerline{$\ds D_{h, \delta}(p) := \frac{p^h -1} {\Phi_\delta(p)}$, $\ \,\delta \mid h$,}

\smallskip\noindent
where $\Phi_\delta(X)$ is the $\delta$th cyclotomic polynomial. }

\bigskip 
Consider, for any unramified prime $p$, the characteristic property of the Frobenius 
automorphism $s_{\mathfrak p}$ of ${\mathfrak p} \mid p$ in $K/\Q$,
$$\eta^{s_{\mathfrak p}} \equiv \eta^p \pmod {\mathfrak p}. $$

\noindent
Let $H_{\mathfrak p} := \langle s_{\mathfrak p} \rangle$ be the decomposition group of ${\mathfrak p}$ 
(denoted $H$ to simplify) and let $\sigma \in G/H$ (or a repre\-sentative in $G$); the Frobenius 
$s_{{\mathfrak p}^\sigma}$ of ${\mathfrak p}^\sigma$ is 
$s_{\mathfrak p}^\sigma := \sigma \cdot s_{\mathfrak p}\cdot   \sigma^{-1}$ and we get 
$\eta^{s_{{\mathfrak p}^\sigma}} \equiv \eta^p \pmod{ {\mathfrak p}^\sigma}$. 
So, if $s_{\mathfrak p}$ and $\sigma$ commute this leads to $s_{\mathfrak p}^\sigma = 
s_{{\mathfrak p}^\sigma} = s_{\mathfrak p}$ and $\eta^{s_{\mathfrak p}} 
\equiv \eta^p \pmod{ {\mathfrak p}^\sigma}$.  In other words, we have
$$\eta^{s_{\mathfrak p}} \equiv \eta^p \ \ \  
\Big(\hbox{mod }{\prd_{{\substack{\sigma \in G/H \\ \sigma.s_{\mathfrak p} = s_{\mathfrak p}.\sigma}} } 
{\mathfrak p}^\sigma}\Big).$$
In the Abelian case, we get (independently of the choice of ${\mathfrak p} \mid p$)
$$\eta^{s_{\mathfrak p}} \equiv \eta^p \pmod p . $$ 

\begin{lemma}\label{poly} Let $\eta \in K^\times$ be such that the multiplicative $\Z[G]$-module
$\langle \eta \rangle_G^{}$ is of $\Z$-rank $n$ and let $\mu(K)$ be the group of roots of unity of $K$.
Let $H$ be a cyclic subgroup of $G$ and let $s$ be any generator of $H$; let $f(X) \in \Z[X]$ be a given 
polynomial such that $f(s) \ne 0$ in $\Z[H]$.

\smallskip\noindent
Then, for all prime numbers $p \gg 0$ such that there exists a prime ideal
${\mathfrak p} \mid p$  for which $s_{\mathfrak p}=s$, whenever $\zeta \in \mu(K)$ we have 
$$\eta^{f(p)} \not\equiv \zeta \pmod {\mathfrak p}.$$ 
\end{lemma}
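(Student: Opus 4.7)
The plan is to use the Frobenius congruence to convert the integer exponent $f(p)$ into the group-ring element $f(s) \in \Z[H]$ acting multiplicatively on $\eta$, and then to exclude the finitely many prime ideals ${\mathfrak p}$ at which the resulting \emph{fixed} algebraic number $\eta^{f(s)}$ could accidentally reduce to a root of unity. Concretely, for $p \gg 0$ the element $\eta$ is a unit at every ${\mathfrak p} \mid p$, so the Frobenius identity $\eta^p \equiv \eta^{s_{\mathfrak p}} \pmod{\mathfrak p}$ applies; iterating it under the hypothesis $s_{\mathfrak p}=s$ yields $\eta^{p^i} \equiv \eta^{s^i} \pmod{\mathfrak p}$ for every $i \geq 0$. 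Writing $f(X) = \sum_i a_i X^i$ and combining these congruences gives
\[ \eta^{f(p)} \equiv \eta^{f(s)} \pmod{\mathfrak p}, \]
where $\eta^{f(s)} := \prod_i (\eta^{s^i})^{a_i}$ denotes the multiplicative action of $f(s) \in \Z[H] \subset \Z[G]$ on $\eta$; this is a fixed element of $K^\times$ independent of $p$.

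Next I would use the rank hypothesis to show $\eta^{f(s)} \notin \mu(K)$. Indeed, $\langle \eta \rangle_G^{} \otimes \Q \simeq \Q[G]$ means the $\Z[G]$-linear map $\alpha \mapsto \eta^\alpha$ from $\Z[G]$ to $K^\times$ has rank-zero kernel, hence is injective (as $\Z[G]$ is torsion-free). Consequently $\eta^{f(s)} \notin \mu(K)$, for otherwise $\eta^{f(s) \cdot |\mu(K)|} = 1$ would force $f(s)=0$ in $\Z[G]$, contradicting the hypothesis on $f$. Equivalently, $\eta^{f(s)} - \zeta$ is a nonzero element of $K$ for every $\zeta \in \mu(K)$.

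To conclude, each nonzero element of $K^\times$ has nonzero valuation at only finitely many prime ideals of $K$, and since $\mu(K)$ is finite, only finitely many prime ideals ${\mathfrak p}$ can divide some $\eta^{f(s)}-\zeta$; a fortiori only finitely many rational primes $p$ lie below such a ${\mathfrak p}$. For any $p$ outside this finite exceptional set and any ${\mathfrak p} \mid p$ with $s_{\mathfrak p}=s$, the first step yields $\eta^{f(p)} \equiv \eta^{f(s)} \pmod{\mathfrak p}$, while by construction $\eta^{f(s)} \not\equiv \zeta \pmod{\mathfrak p}$ for any $\zeta \in \mu(K)$, as required. The only non-formal input is the non-coincidence $\eta^{f(s)} \notin \mu(K)$ established in the middle paragraph; the rest is a direct iteration of the Frobenius together with elementary divisibility.
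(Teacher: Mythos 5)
Your proposal is correct and follows essentially the same route as the paper: reduce $\eta^{f(p)}$ to $\eta^{f(s)} \pmod{\mathfrak p}$ via the Frobenius, use the $\Z$-rank $n$ hypothesis (injectivity of $\alpha \mapsto \eta^\alpha$ on $\Z[G]$) to see $\eta^{f(s)} \notin \mu(K)$, and conclude that only finitely many $p$ can satisfy the congruence. The only cosmetic difference is the last step: the paper takes the norm and notes that ${\rm N}_{K/\Q}(\eta^{f(s)}-\zeta)$ would lie in $p^{\vert H\vert}\Z$, which makes the bound on the exceptional $p$ explicit, whereas you invoke the finiteness of the support of the fixed nonzero element $\eta^{f(s)}-\zeta$ for each of the finitely many $\zeta \in \mu(K)$ -- an equivalent argument.
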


\begin{proof} We have $\eta^{f(p)} \equiv \eta^{f(s)} \pmod  {\mathfrak p}$; thus, if $\eta^{f(p)} \equiv 
\zeta \pmod {\mathfrak p}$ for some $\zeta$, this leads to $\eta^{f(s)}  - \zeta \equiv 0 \pmod {\mathfrak p}$ 
giving, by the norm in $K/\Q$,
$${\rm N}_{K/\Q}(\eta^{f(s)}  - \zeta) \equiv 0 \pmod {p^{\vert H \vert}} . $$ 
Since  $\langle \eta \rangle_G^{}$ is of multiplicative $\Z$-rank $n$ and $f(s) \ne 0$, 
we have $\eta^{f(s)} \notin \mu(K)$; then ${\rm N}_{K/\Q}(\eta^{f(s)}  - \zeta)$ is a nonzero rational constant 
depending only on $\eta$, $f(s)$, $\zeta$, and whose numerator is in $p^{\vert H \vert}\Z$ 
(a contradiction for $p \gg 0$).
\end{proof}

The statement of the lemma does not depend on the choice of $s$ generating $H$, nor
on the choice of the prime ideal ${\mathfrak p} \mid p$ such
that $s_{\mathfrak p} = s$ (in the Abelian case, any ${\mathfrak p} \mid p$ is suitable since 
$s_{{\mathfrak p}^\sigma}=s_{\mathfrak p}$ for all $\sigma \in G$).

\smallskip
If $s \in G$ is of order $h \geq 1$, any nonzero element of $\Z[H]$ can be writen $f(s)$ where 
$f(X)$ is of degree $<h$; if we take $f(X)$ of degree $0$, then we have
$f(s)=f \in \Z\setminus \{0\}$ 
regardless of $h$ and $s$, giving the obvious result

\medskip
\centerline{$\eta^f \not\equiv \zeta \pmod {\mathfrak p}\ $ for any $p \gg 0$.}

\medskip
Naturally, an interesting application of this Lemma is when $f(X) \mid X^h-1$ in $\Z[X]$,
$f (X) \ne X^h-1$, and when the degree of $f(X)$ is maximal. This explains why the case 
$h=n_p=1$ ($p$ totally split in $K/\Q$) is uninteresting since $f(X) \mid X-1$, with $f (X) \ne X-1$, 
gives $f=1$ and the same conclusion as above.

\section{Consequences for the values of $o_{\mathfrak p}(\eta)$ and $o_p(\eta)$} \label{sect2}
We have the factorization 
$$p^h-1 =\prd_{\delta \mid h} \Phi_\delta(p),$$ 
where $\Phi_\delta(X)$ is the $\delta$th cyclo\-tomic
polynomial (see \cite[Chapter 2]{Wa}). So we can consider the divisors $\ds\prd_{\delta \in I} \Phi_\delta(p)$,
where $I$ is any strict subset of the set  of divisors of $h$.
Of course, it will be sufficient to restrict ourselves to {\it maximal} subsets $I$, which gives the divisors 
$\ds D_{h, \delta}(p) :=\frac{p^h-1}{\Phi_{\delta}(p)}, \ \ \delta \mid h$.
For instance, if $h=6$, we get the set
$$\{ p^5 + p^4 + p^3 + p^2 + p + 1 , \  p^5 - p^4 + p^3 - p^2 + p - 1, \ 
 p^4 - p^3 + p - 1, \  p^4 + p^3 - p - 1 \}, $$

\noindent
giving the complete set of ``polynomial divisors'' of $p^6-1$,

\medskip \noindent
$\{1, p - 1, p + 1, p^2 - 1, p^2 - p + 1, p^3 - 2\,p^2 + 2\,p - 1, p^3 + 1, p^4 - p^3 + p - 1, p^2 + p + 1$,

\smallskip \noindent
$p^3 - 1, p^3 + 2\,p^2 + 2\,p + 1, p^4 + p^3 - p - 1, p^4 + p^2 + 1, p^5 - p^4 + p^3 - p^2 + p - 1$, 

\hfill $p^5 + p^4 + p^3 + p^2 + p + 1\}$. 

\begin{theorem}\label{thm} Let $K/\Q$ be Galois of degree $n$, of Galois group~$G$. 
Let $h \mid n$ be a possible residue degree in $K/\Q$. Let $\mu(K)$ be the group of roots of unity contained in $K$.
Let $\eta \in K^\times$ be such that the multiplicative $\Z[G]$-module generated by $\eta$ is of $\Z$-rank $n$. 

\smallskip\noindent
Then for all (unramified) prime number $p \gg 0$, with residue degree $n_p = h$, and for any 
prime ideal ${\mathfrak p}\mid p$, the least integer $k \geq 1$ for which there exists $\zeta \in \mu(K)$ 
such that $\eta^k \equiv \zeta \pmod {\mathfrak p}$ is a divisor of $p^h-1$ which does not divide any 
of the integers 
$$D_{h, \delta}(p) := \frac{p^h -1} {\Phi_\delta(p)},\ \ \delta \mid h,$$ 
where $\Phi_\delta(X)$ is the $\delta$th cyclotomic polynomial.

\smallskip\noindent
Hence $o_{\mathfrak p} (\eta)$ and a fortiori $o_p (\eta)$
(cf. Definition \ref{ordres}), do not divide any of the $D_{h, \delta}(p)$.
\end{theorem}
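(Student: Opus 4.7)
The plan is to reduce Theorem~\ref{thm}, for each fixed $\delta \mid h$, to an application of Lemma~\ref{poly} with the polynomial $f(X) := D_{h,\delta}(X) = (X^h-1)/\Phi_\delta(X) \in \Z[X]$. Concretely, I will fix a prime ideal ${\mathfrak p} \mid p$ with $n_p = h$, set $s := s_{\mathfrak p}$ and $H := \langle s \rangle$ (so $|H|=h$), and let $k_0 \geq 1$ denote the least integer such that $\eta^{k_0} \equiv \zeta_0 \pmod{\mathfrak p}$ for some $\zeta_0 \in \mu(K)$. The set of positive integers $k$ for which some $\zeta \in \mu(K)$ satisfies $\eta^k \equiv \zeta \pmod{\mathfrak p}$ is stable under addition and subtraction, since from $\eta^{k_1}\equiv\zeta_1$ and $\eta^{k_2}\equiv\zeta_2$ one gets $\eta^{k_1-k_2}\equiv\zeta_1\zeta_2^{-1}\in\mu(K)$, so Euclidean division by $k_0$ forces every such $k$ to be a multiple of $k_0$.

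The first substantive step will be to verify the hypothesis of Lemma~\ref{poly} for $f = D_{h,\delta}$, namely that $f(s) \neq 0$ in $\Z[H]$. Identifying $\Z[H]$ with $\Z[X]/(X^h-1)$ via $s \leftrightarrow X$, I note that $D_{h,\delta}(X)$ is monic of degree $h - \varphi(\delta) < h$, and therefore represents a nonzero residue class. Lemma~\ref{poly} then yields, for all $p \gg 0$ with $s_{\mathfrak p} = s$, the non-congruence $\eta^{D_{h,\delta}(p)} \not\equiv \zeta \pmod{\mathfrak p}$ for every $\zeta \in \mu(K)$. Combined with the previous observation, this forces $k_0 \nmid D_{h,\delta}(p)$: otherwise $\eta^{D_{h,\delta}(p)}$ would be a power of $\eta^{k_0}$ and hence congruent modulo ${\mathfrak p}$ to a power of $\zeta_0$, which lies in $\mu(K)$.

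To deduce the statements on $o_{\mathfrak p}(\eta)$ and $o_p(\eta)$, I will use that $\eta^{o_{\mathfrak p}(\eta)} \equiv 1 \in \mu(K)$, which gives $k_0 \mid o_{\mathfrak p}(\eta)$; the previous step then yields $o_{\mathfrak p}(\eta) \nmid D_{h,\delta}(p)$, and $o_p(\eta) \nmid D_{h,\delta}(p)$ follows since $o_{\mathfrak p}(\eta) \mid o_p(\eta)$ for each ${\mathfrak p}\mid p$. To pass from ``some fixed $s$'' to ``every ${\mathfrak p} \mid p$ with $n_p=h$'', I will iterate over the finitely many pairs $(H,s)$, where $H$ ranges over the cyclic subgroups of $G$ of order $h$ and $s$ over the generators of $H$, and over the finitely many $\delta \mid h$; each triple excludes only finitely many primes via Lemma~\ref{poly}, so the union of exceptions is still finite, outside of which every ${\mathfrak p}$ above $p$ with $n_p = h$ satisfies the conclusion, its Frobenius being one of the enumerated $s$'s.

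The main obstacle in principle would have been the verification that $D_{h,\delta}(s) \neq 0$ in $\Z[H]$, but as indicated a simple degree count suffices, and no deeper fact about the idempotent decomposition of $\Q[H]$ or the pairwise coprimality of the cyclotomic factors of $X^h-1$ is needed. The whole content of the theorem therefore lies in two places: the identification of the right polynomial $f(X) = (X^h-1)/\Phi_\delta(X)$ (already flagged by the ``complete set of polynomial divisors'' of $p^h-1$ listed above the statement), and the elementary translation of \emph{``$\eta^{D_{h,\delta}(p)}$ is not a root of unity modulo~${\mathfrak p}$''} into \emph{``$D_{h,\delta}(p)$ is not a multiple of $o_{\mathfrak p}(\eta)$ nor of $o_p(\eta)$''}.
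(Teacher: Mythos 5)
Your proposal is correct and takes essentially the same route as the paper: both reduce the statement to Lemma \ref{poly} applied to $f(X)=D_{h,\delta}(X)$, with the nonvanishing of $D_{h,\delta}(s)$ in $\Z[H]\simeq \Z[X]/(X^h-1)\Z[X]$ checked by the same degree count, and both then convert a hypothetical divisibility of $D_{h,\delta}(p)$ by the least exponent into a forbidden congruence $\eta^{D_{h,\delta}(p)}\equiv\zeta \pmod{\mathfrak p}$. The only cosmetic difference is that you obtain the structure of the exponent set by Euclidean division by $k_0$ where the paper uses a B\'ezout/gcd argument (and you should add the one-line remark that $p^h-1$ itself lies in this set, since $\eta$ is a unit modulo ${\mathfrak p}$, which gives the asserted divisibility $k_0\mid p^h-1$).
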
 

\begin{proof} Let $k'={\rm gcd}\,(k, p^h-1)$. Then we have $k'=\lambda\,k  + \mu\,(p^h-1)$,
$\lambda, \mu \in \Z$, and $\eta^{k'}\equiv \eta^{\lambda\,k}
\equiv \zeta^\lambda =: \zeta' \pmod {\mathfrak p}$; but $k' \mid k$, so $k = k' \mid p^h-1$.
Suppose that $k$ divides some 
$\ds D_{h, \delta}(p) =\frac{p^h -1} {\Phi_\delta(p)} = \prd_{\delta' \mid h,\, \delta' \ne \delta} \Phi_{\delta'}(p)$.
Let $s$ be the Frobenius of ${\mathfrak p}$ and $H=\langle s \rangle$ its decomposition 
group (of order $h$). Thus $\eta^k \equiv \zeta \pmod {\mathfrak p}$ yields 
$$\eta^{D_{h, \delta}(p)} \equiv \zeta' \pmod {\mathfrak p}, \  \hbox{ $\zeta' \in \mu(K)$,} $$
giving 
$$\eta^{D_{h, \delta}(p)} \equiv \eta^{D_{h, \delta}(s)} \equiv \zeta' \pmod {\mathfrak p}.$$

\smallskip\noindent
From $\Z[H] \simeq \Z[X]/(X^h-1)\, \Z[X]$, we get in $\Z[H]$
$$D_{h, \delta}(s) = \prd_{\delta' \mid h,\, \delta' \ne \delta} \Phi_{\delta'}(s) \ne 0$$ 
since $D_{h, \delta}(X) \notin (X^h-1)\, \Z[X]$; the polynomial $D_{h, \delta}(X)\in \Z[X]$ being 
independent of $p$,  Lemma \ref{poly} applied to  $f (X)= D_{h, \delta}(X)$ gives a contradiction 
for all $p \gg 0$ with residue degree $n_p = h$.
\end{proof}

If $\langle \eta \rangle_G^{}$ is not of $\Z$-rank $n$, a statement does exist which depends on the 
$G$-representation $\langle \eta \rangle_G^{}$; for instance, let $K=\Q(\sqrt m)$ 
and $\eta \in K^\times \setminus \mu(K)$:

\medskip
-- If ${\rm N}_{K/\Q}(\eta)=\pm 1$, then $o_p(\eta) \nmid D_{2, 2}(p) = p-1$ 
for all prime $p \gg 0$, inert in $K/\Q$.

\medskip
-- If $\eta^{1-s}= \pm1$, 
then $o_p(\eta) \nmid D_{2, 1}(p) = p+1$ (e.g., $\eta = \sqrt m$, $m \ne -1$).

\medskip
The expression {\it ``for all $p \gg 0$ of residue degree $n_p = h$''} in the theorem is effective 
and depends, numerically, only on $h$ and the conjugates of $\eta$. 

\medskip
The theorem gives the generalization of the particular case $h=2$  in \cite{Gr1}.

\medskip
In the above case $h=6$ and $p \gg 0$ (with $n_p=6$), the orders $o_{\mathfrak p} (\eta)$ 
are divisors of $p^6-1$ which are not divisors of any of the integers in the set:
$$\{\,  p^5 + p^4 + p^3 + p^2 + p + 1 , \  p^5 - p^4 + p^3 - p^2 + p - 1, \ 
 p^4 - p^3 + p - 1, \  p^4 + p^3 - p - 1\}. $$

For $p=7$ and $h=6$, we have $60$ divisors of $p^6-1 = 2^4 \cdot 3^2 \cdot 19 \cdot 43$, 
and the distinct divisors of these $4$ polynomials are the $52$ integers:

\medskip\noindent
$1, 2, 3, 4, 6, 8, 9, 12,16, 18, 19, 24,36,  38, 43,48,  57, 72, 76, 86, 114, 129, 144$,  

\smallskip \noindent
$152,  171, 172, 228, 258, 304, 342, 344, 387, 456, 516, 684, 688,774,  817, 912$,

\smallskip \noindent
$1032, 1368, 1634, 2064, 2451,2736,  3268, 4902, 6536,7353,  9804,\! 14706,\! 19608$.

\medskip
So the remaining (possible) divisors of $p^6-1$ are 

\medskip \noindent
$1548$, $3096$, $6192$, $13072$, $29412$, $39216$, $58824$, $117648$.

\medskip
Of course, in our example, the prime $p=7$ is too small regading $\eta$, 
but the interesting fact (which is similar for larger $p$ and any integer $h$) 
is the great number of impossible divisors of $p^h-1$ for small numbers $\eta$.

\noindent
For $p=1093$ (resp. $504202701918008951235073$), only $76$ (resp. $242424$) 
divisors are possible among the $384$ (resp. $518144$) divisors of $p^6-1$.    

\smallskip
The case $h=\ell$ (a prime) implies that $o_{\mathfrak p}(\eta)$ is not a divisor of $p-1$ nor a divisor of
$p^{\ell-1}+ \cdots+ p+1$ for $p \gg 0$ with residue degree $n_p=\ell$; this means that 
$o_{\mathfrak p}(\eta) = d_1 d_2$ 
with $d_1 \mid p-1$, $d_1 \ne 1$, $d_2 \mid p^{\ell-1}+ \cdots+ p+1$, $d_2 \ne 1$ (taking care of the 
fact that when $p\equiv 1 \pmod \ell$, we have the relation ${\rm gcd\,}(p-1, p^{\ell-1}+ \cdots+ p+1) = \ell$). 

\begin{remark}\label{probas} {\rm 
It is clear that if $r \in \N \setminus \{0\}$ is small, Theorem \ref{thm} implies
that for all prime $p \gg 0$ with residue degree $n_p = h$ and for any ${\mathfrak p}\mid p$, 
the least integer $k \geq 1$ for which there exists $\zeta \in \mu(K)$ such that 
$\eta^k \equiv \zeta \pmod {\mathfrak p}$ cannot divide any of the integers $r \cdot D_{h, \delta}(p)$, 
$\delta \mid h$ (indeed, $\eta^r$ is still small in an Archimedean point of view). 
This makes sense only when $r=r_\delta$ is choosen, for each $D_{h, \delta}(p)$, as a small divisor of
$\Phi_\delta(p)$.

\smallskip\noindent
So the probability of $o_{\mathfrak p}(\eta) \mid r_\delta \cdot  D_{h, \delta}(p)$ 
increases (from $0$ to $1$) when the factor $r_\delta \mid \Phi_\delta(p)$ increases 
(from $r_\delta=1$ to $r_\delta=\Phi_\delta(p)$).
In the example $h=\ell$, where $o_{\mathfrak p}(\eta) = d_1 d_2$,
$d_1 \mid p-1$, $d_2 \mid p^{\ell-1}+ \cdots+ p+1$, we have $d_1 \ \&\  d_2 \to \infty$ for $p\to \infty$.}
\end{remark}

\section{A numerical example}\label{sect3}
Let $K = \Q(x)$ be the cyclic cubic field of conductor $7$ defined by $x=\zeta_7+\zeta_7^{-1}$
from a primitive seventh root of unity $\zeta_7$; its irreducible polynomial is $X^3+X^2-2X-1$. 

\smallskip
Let $\eta=8\,x+5$ of norm $-203$; then for $p<200$, inert in $K$ (i.e., $p^2 \not\equiv 1 \pmod 7$), 
we obtain the exceptional example 
$o_{17}(\eta) = 307 = p^2+p+1$ and no other when $p$ increases; we get some illustrations with a small 
$r \mid p-1$, $r >1$ (e.g., $p=101$, $r=2$, with $o_{p}(\eta) =r \cdot ( p^2+p+1)$),
according to the following numerical results; note that when $p\equiv 1 \pmod 3$, we have 
$o_p(\eta)=\frac{1}{3} \cdot {\rm gcd\,}(o_p(\eta), p-1) \cdot {\rm gcd\,}(o_p(\eta), p^2+p+1)$:

\medskip
(i) $p \equiv 2 \pmod 7$:
$$\begin{array}{cccc}
\hbox{$p$} &\hbox{\hspace{0.5cm}${\rm gcd\,}(o_p(\eta), p-1)$} & \hbox{\hspace{0.8cm} 
${\rm gcd\,}(o_p(\eta), p^2+p+1)$} \\ \\
2  &     1  & 1 \\
23  &     11  & 553 \\
37    &   36  &  201 \\
79   &    78   &   6321 \\
107   &    53   &   11557 \\
149  &     37   &   22351 \\
163  &     54   &   26733 \\
191  &     190   &   36673
\end{array} $$

(ii) $p \equiv 3 \pmod 7$:
$$\begin{array}{cccc}
\hbox{$p$}   &  \hbox{${\rm gcd\,}(o_p(\eta), p-1)$}  &  \hbox{\hspace{0.8cm} 
${\rm gcd\,}(o_p(\eta), p^2+p+1)$} \\ \\
3  &    1  & 13 \\
\ 17^{\,*}    &   1  &307 \\
31   &    15 & 993 \\
59   &    58 & 3541 \\
73   &    9 & 5403 \\
\ \ \  101^{\,**}   &    2 & 10303 \\
157   &    26 & 8269 \\
199  &     198 & 39801
\end{array} $$

(iii) $p \equiv 4 \pmod 7$:
$$\begin{array}{cccc}
\hbox{$p$}   &  \hbox{\hspace{0.5cm}${\rm gcd\,}(o_p(\eta), p-1)$}  &  \hbox{\hspace{0.8cm} 
${\rm gcd\,}(o_p(\eta), p^2+p+1)$} \\ \\
11  &     10 & 133 \\
53    &   26 & 2863 \\
67   &    33 & 4557 \\
109  &     27 & 11991 \\
137   &    136 & 18907 \\
151   &    75 & 22953 \\
179   &    89 & 32221 \\
193   &    192 & 37443
\end{array} $$

(iv) $p \equiv 5 \pmod 7$:
$$\begin{array}{cccc}
\hbox{$p$}   &  \hbox{\hspace{0.5cm}${\rm gcd\,}(o_p(\eta), p-1)$}  &  \hbox{\hspace{0.8cm} 
${\rm gcd\,}(o_p(\eta), p^2+p+1)$} \\ \\
5  &     4  & 31 \\
19   &    9 & 381 \\
47   &    23 & 2257 \\
61   &    10 & 1261 \\
89   &    11 & 8011 \\
103   &    102 & 10713 \\
131   &    65 & 17293 \\
173   &    172 & 30103
\end{array} $$

With the same data, the least values of ${\rm gcd\,}(o_p(\eta), p-1)$ are:

\smallskip\noindent
1 (for $p=2$, $3$, $17$),  
2 (for $p=101$), 3 (for $p=13669$, for wich we get $o_p(\eta) = 560565693$), 4 (for $p=5$, $317$), 
9 (for $p=19$, $73$). 

\smallskip \noindent
Up to $p \leq 10^7$, we have no other solutions for ${\rm gcd\,}(o_p(\eta), p-1) < 10$. 

\smallskip \noindent
For ${\rm gcd\,}(o_p(\eta), p^2+p+1) < 100$ we get $1$ (for $p=2$), $13$ (for $p=3$), $31$ (for $p=5$); 
for ${\rm gcd\,}(o_p(\eta), p^2+p+1) < 1000$ we only have the primes $p=2, 3, 5, 11, 17, 19, 23, 31, 37$ 
giving a solution up to $10^7$.

\section{A lower bound for $o_p(\eta)$}\label{sect4}

When $\eta$ is fixed in $K^\times$,  very small orders are impossible as $p \to \infty$ because of the 
following theorem giving Archimedean constraints; in this result none hypothesis is done on the 
rank of the multiplicative $\Z[G]$-module generated by $\eta$ (except that this $\Z$-rank is assumed 
to be $\ne 0$) nor on the field $K$ itself. We denote by $Z_K$ the ring of integers of $K$.

\begin{theorem} \label{log} Let $\mu(K)$ be the group  of roots of unity contained in $K$.
Let $\eta \in K^\times \setminus \mu(K)$ and $\nu \in \N \setminus \{0\}$ be such that $\nu\,\eta \in Z_K$. 
Then, for any $p$ prime to $\eta$ and $\nu$, the congruence 
$\eta^k \equiv \zeta \pmod p, \,  \hbox{$\zeta \in \mu(K)$,  $k \geq 1$}$,
implies the inequality
$$\ds k \geq \frac{{\rm log}(p) - {\rm log}(2) }{{\rm max} \big ({\rm log}(\nu \cdot c_0(\eta)) , {\rm log}(\nu) \big)}, $$
where $c_0(\eta) = {\rm max}_{\sigma \in G}(\vert \eta^\sigma \vert \big )$.

\noindent
If $\eta \in Z_K$ (i.e., $\nu=1$), then we get $\ds k \geq \frac{{\rm log} (p-1) }{{\rm log} (c_0(\eta))}$.

\smallskip \noindent
In other words, if $Z_{K, (p)}$ is the ring of $p$-integers of $K$, the order of the image of $\eta$ in 
$Z_{K, (p)} \big / \mu(K)\!\cdot \! (1 + p\,Z_{K, (p)})$, and a fortiori $o_p(\eta)$, satisfies the above inequalities.
\end{theorem}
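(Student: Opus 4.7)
The plan is a classical ``size-vs-norm'' estimate that proceeds in three moves: clear the denominator~$\nu$ so that the congruence lives inside $Z_K$, use the non-Archimedean identity $|Z_K/pZ_K|=p^n$ (with $n=[K:\Q]$) to get a lower bound on the absolute value of a suitable norm, and then contrast this against the Archimedean upper bound produced by the triangle inequality applied to each Galois conjugate.

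For the first move, since $\gcd(\nu,p)=1$ the integer $\nu$ is a unit in $Z_{K,(p)}$, so multiplying $\eta^k\equiv\zeta\pmod p$ by $\nu^k$ gives $\nu^k(\eta^k-\zeta)\in pZ_{K,(p)}$. Both $\nu^k\eta^k$ (via the hypothesis $\nu\eta\in Z_K$) and $\nu^k\zeta$ lie in $Z_K$, and the identity $Z_K\cap pZ_{K,(p)}=pZ_K$ upgrades the congruence to $\nu^k(\eta^k-\zeta)\in pZ_K$. Because $\eta\notin\mu(K)$ this element is nonzero, and its absolute norm is therefore a nonzero multiple of~$p^n$, giving
$$\nu^{kn}\cdot|{\rm N}_{K/\Q}(\eta^k-\zeta)|\;\geq\;p^n.$$

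For the upper bound I factor ${\rm N}_{K/\Q}(\eta^k-\zeta)=\prod_{\sigma\in G}(\eta^{\sigma k}-\zeta^\sigma)$ and use $|\zeta^\sigma|=1$ together with the triangle inequality to bound each factor by $|\eta^\sigma|^k+1\leq 2\max(c_0(\eta),1)^k$. Inserting this into the previous display and taking $n$-th roots yields $p\leq 2\max(\nu\,c_0(\eta),\nu)^k$, whence the general lower bound on~$k$ follows by taking logarithms. For the refined form when $\nu=1$, I appeal to Kronecker's theorem: $\eta\in Z_K\setminus\mu(K)$ forces $c_0(\eta)>1$, so the factor of~$2$ can be dropped in favour of the sharper $|\eta^{\sigma k}-\zeta^\sigma|\leq c_0(\eta)^k+1$ uniformly, producing $p\leq c_0(\eta)^k+1$ and the stated $k\geq\log(p-1)/\log c_0(\eta)$ directly. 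The ``in other words'' reformulation is tautological: the order of the class of~$\eta$ in $Z_{K,(p)}/\mu(K)\cdot(1+pZ_{K,(p)})$ is by definition the minimal $k\geq 1$ with $\eta^k\equiv\zeta\pmod p$ for some $\zeta\in\mu(K)$, and $o_p(\eta)$ is a multiple of this (case $\zeta=1$), so both quantities satisfy the bound. There is no genuine obstacle here; the only care required is tracking the~$\nu^k$ through the norm computation so that the $\max$ in the denominator of the final bound comes out correctly in both regimes $c_0(\eta)\geq 1$ and $c_0(\eta)<1$.
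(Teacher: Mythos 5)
Your proof is correct and follows essentially the same size-versus-norm argument as the paper. The only difference is cosmetic: the paper writes $\nu^k(\eta^k-\zeta)=\Lambda\,p$ and passes to a conjugate where $\vert\Lambda\vert\geq 1$ before applying the triangle inequality at that single Archimedean place, whereas you take the full norm ${\rm N}_{K/\Q}$ and extract an $n$-th root; both hinge on a nonzero algebraic integer having absolute norm at least $1$ and yield identical bounds, including the appeal to Kronecker's theorem for $c_0(\eta)>1$ when $\nu=1$.
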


\begin{proof} Put $\eta = \frac{\theta}{\nu}$, with $\theta \in Z_K$.
The congruence is equivalent to $\theta^k = \zeta \, \nu^k  + \Lambda\, p$, where 
$\Lambda \in Z_K \setminus \{0\}$ (because $\eta \notin \mu(K)$).
Taking a suitable conjugate of this equality, we can suppose $\vert \Lambda \vert \geq 1$.
Thus 
$$\vert \Lambda \vert\, p= \vert \theta^k - \zeta\, \nu^k \vert \leq \vert \theta \vert^k + \nu^k$$ 
giving $\vert \theta \vert^k+\nu^k \geq p$; so, using a conjugate $\theta_0$ such that 
$\vert\theta_0\vert = {\rm max}_{\sigma \in G}(\vert \theta^\sigma \vert )$, 
we have a fortiori $\vert \theta_0 \vert^k+\nu^k \geq p$, with $\vert \theta_0 \vert \geq 1$ 
since $\theta \in Z_K$.

\smallskip
(i) If $\nu \geq 2$, then 
$$p \leq \vert \theta_0 \vert^k+\nu^k \leq 2\, {\rm max}(\vert \theta_0 \vert^k, \nu^k)$$
giving the result.

\smallskip
(ii) The case $\nu = 1$, used in \cite[Lemme 6.2]{Gr1}, gives $\vert \theta_0 \vert^k  \geq p-1$, 
hence the upper bound $\ds k \geq \frac{{\rm log}(p-1) }{ {\rm log}(c_0(\eta))}$ since 
$\vert \theta_0 \vert=c_0(\eta)>1$ (because $\eta \notin \mu(K)$).
\end{proof}

Under the assumptions of Theorem \ref{thm} we have the following result.

\begin{corollary} Suppose to simplify that $\eta \in Z_K$; let $p$ be unramified of residue degree 
$n_p$ such that for some $\delta \mid n_p$, $o_p(\eta) = r \cdot d$, $d \mid D_{n_p, \delta}(p)$, 
$r = r_\delta \mid \Phi_\delta(p)$ (cf. Remark \ref{probas}). Then 
$\ds r \geq \frac{{\rm log} (p-1) }{{\rm log} (c_0(\eta^{D_{n_p, \delta}(s)}))}$, 
where $s$ generates any decomposition group of $p$.
\end{corollary}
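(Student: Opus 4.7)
The strategy is to reduce the claim to an application of Theorem \ref{log} applied to the auxiliary element
\[
\xi := \eta^{D_{n_p,\delta}(s)} \in K^\times,
\]
defined via the $\Z[G]$-module structure on $\langle \eta \rangle_G^{}$. The key feature is that $\xi$ depends only on $\eta$ and the Galois element $s$, not on $p$; in particular the archimedean size $c_0(\xi) = \max_\sigma |\xi^\sigma|$ is a constant as $p$ varies within the Frobenius class of $s$.

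First I would establish the congruence $\xi^r \equiv 1 \pmod p$. The defining Frobenius identity $\eta^s \equiv \eta^p \pmod{\mathfrak{p}}$ substituted into the polynomial $D_{n_p,\delta}(X)$ gives $\xi \equiv \eta^{D_{n_p,\delta}(p)} \pmod{\mathfrak{p}}$, which in the abelian case lifts to a congruence modulo $p$ itself. Using the hypothesis $d \mid D_{n_p,\delta}(p)$ to write $D_{n_p,\delta}(p) = d\cdot m$, and combining with $o_p(\eta) = r\cdot d$, this yields
\[
\xi^r \equiv \eta^{r\cdot d\cdot m} = \bigl(\eta^{o_p(\eta)}\bigr)^m \equiv 1 \pmod p.
\]

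Second, I would apply Theorem \ref{log} to $\xi$. The nontriviality $\xi \notin \mu(K)$ follows from the full-rank hypothesis on $\langle\eta\rangle_G^{}$ together with $D_{n_p,\delta}(s) \neq 0$ in $\Z[G]$, which is exactly the ingredient used in the proof of Theorem \ref{thm}. A suitable integer denominator $\nu \in \N$ (one may take a power of $|N_{K/\Q}(\eta)|$ to clear the negative coefficients of $D_{n_p,\delta}$) ensures $\nu\xi \in Z_K$. Applying Theorem \ref{log} to the congruence $\xi^r \equiv 1 \pmod p$ then directly produces
\[
r \geq \frac{\log(p-1)}{\log c_0\bigl(\eta^{D_{n_p,\delta}(s)}\bigr)},
\]
which is the claimed inequality.

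The main obstacle is the non-abelian case: the Frobenius identity then only reads $\eta^s \equiv \eta^p \pmod{\mathfrak{p}}$, so the chain above yields $\xi^r \equiv 1$ merely modulo $\mathfrak{p}$ and not modulo $p$. One must then rerun the proof of Theorem \ref{log} locally at $\mathfrak{p}$, using the norm divisibility $|N_{K/\Q}(\theta^r - \nu^r)| \in p^{n_p}\Z$ (writing $\xi = \theta/\nu$) and selecting a Galois conjugate of maximal absolute value; this recovers a bound of the same form, up to a mild constant that is absorbed in the convention $p \gg 0$.
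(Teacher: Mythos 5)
Your overall route is indeed the one the paper intends: the corollary is stated without an explicit proof, as an immediate application of Theorem \ref{log} to the auxiliary element $\xi=\eta^{D_{n_p,\delta}(s)}$, whose nontriviality ($\xi\notin\mu(K)$) follows, as you say, from the rank-$n$ hypothesis together with $D_{n_p,\delta}(s)\neq 0$ in $\Z[H]$, and whose $r$-th power becomes trivial by combining the Frobenius congruence with $o_p(\eta)=r\cdot d$, $d\mid D_{n_p,\delta}(p)$. So the skeleton of your argument matches the intended derivation.

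Two steps, however, do not actually deliver the inequality in the stated form. First, the denominator $\nu$: in general $\eta\in Z_K$ does \emph{not} make $\xi$ integral, because $D_{n_p,\delta}(X)$ can have negative coefficients (e.g. $D_{2,2}(X)=X-1$, $D_{6,3}(X)=X^4+X^3-X-1$), so some $\nu>1$ is genuinely needed; but then Theorem \ref{log} gives $r\geq\big({\rm log}(p)-{\rm log}(2)\big)/\max\big({\rm log}(\nu\,c_0(\xi)),{\rm log}(\nu)\big)$, with $\nu$ inside the logarithm of the denominator. This is strictly weaker than the claimed ${\rm log}(p-1)/{\rm log}\,c_0(\eta^{D_{n_p,\delta}(s)})$ and does not become it ``directly''; one must either have $\xi\in Z_K$ (true e.g. for $\delta=1$, where $D_{n_p,1}$ has non-negative coefficients) or rerun the proof of Theorem \ref{log} after clearing denominators multiplicatively (writing the congruence as $\eta^{rA(s)}\equiv\eta^{rB(s)}\pmod{\mathfrak p}$ with $A,B$ the positive and negative parts of $D_{n_p,\delta}$), which changes the constant. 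Second, your non-abelian patch is quantitatively wrong: with a congruence only modulo $\mathfrak p$, the nonzero element $\theta^r-\nu^r$ has norm divisible by ${\rm N}(\mathfrak p)=p^{n_p}$, but that norm is a product of $n$ conjugate factors each bounded by $2\max(c_0,\nu)^r$, so the argument only yields $r\geq\big(\frac{n_p}{n}{\rm log}(p)-{\rm log}(2)\big)/{\rm log}\max(c_0,\nu)$, a loss by the multiplicative factor $n_p/n$ that grows without bound in absolute terms and is not ``a mild constant absorbed in $p\gg0$'' (the corollary, moreover, carries no $p\gg0$ proviso). The clean constant ${\rm log}(p-1)/{\rm log}\,c_0(\xi)$ is obtained exactly in the situation where the congruence lifts to one modulo $p$ (abelian $K$, as in the cyclic cubic example the paper applies it to); your write-up should either restrict to that setting or state the weakened constants honestly.
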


In the previous example of Section \ref{sect3}, for $p \approx 10^7$ and $o_p(\eta)=r\cdot d$, $d \mid D_{3, \delta}(p)$,
we find, from the corollary, $r \geq 3$ for $\delta = 1$ (i.e., $r \mid p-1$), $r \geq 9$ for $\delta = 3$ 
(i.e., $r \mid p^2+p+1$).

\section{Densities--Probabilities for $o_{\mathfrak p}(\eta)$ and $o_p(\eta)$} \label{sect5}
In this section, we examine some probabilistic aspects concerning the orders modulo ${\mathfrak p} \mid p$ 
of an $\eta \in K^\times$. For any $p$, unramified in $K/\Q$, recall that 
$g_p$ is the number of prime ideals ${\mathfrak p} \mid p$ and $n_p$ the common residue degree of these ideals. 
Let $Z_K$ be the ring of integers of $K$; the residue fields $F_{\mathfrak p} = Z_K/{\mathfrak p}$ 
are isomorphic to $\F_{p^{n_p}}$.

\subsection{Densities}\label{D}
It is assumed in this short subsection that $p$ is fixed and that $\eta \in K^\times$ is a variable modulo $p$, 
prime to the given~$p$; in other words, $\eta$ varies in the group $(Z_{K, (p)} / p\, Z_{K, (p)})^\times$
of invertible elements of the quotient $Z_{K, (p)} / p\, Z_{K, (p)}$, where $Z_{K, (p)}$ is the ring of 
$p$-integers of $K$, so that we have
$$(Z_{K, (p)} / p\, Z_{K, (p)})^\times \simeq \prd_{{\mathfrak p} \mid p} F_{\mathfrak p}^\times
\ \  \hbox{($p$ unramified).} $$ 
For each prime ideal ${\mathfrak p} \mid p$, let 
$\eta_{\mathfrak p} \in F_{\mathfrak p}^\times$ be the residue image of $\eta$ at ${\mathfrak p}$. 

\smallskip
The density of numbers $\eta$, whose diagonal image is given in 
$\ds \prd_{{\mathfrak p} \mid p} F_{\mathfrak p}^\times$, is \vspace{-0.4cm}
$$\frac{1}{(p^{n_p}-1)^{g_p} }$$ 
because the map $\eta \pmod p \mapsto (\eta_{\mathfrak p})_{{\mathfrak p}\mid p} \in
\prd_{{\mathfrak p} \mid p} F_{\mathfrak p}^\times$ yields an isomorphism (from chinese remainder theorem) 
and, in some sense, the $g_p$ conditions on the $\eta_{\mathfrak p}$, ${\mathfrak p} \mid p$, 
are independent as $\eta$ varies (the notion of density is purely algebraic and the previous Archimedean 
obstructions of Sections \ref{sect2}  \& \ref{sect4} do not exist). 
Thus the orders $o_{\mathfrak p}(\eta)$ and $o_p(\eta)$ have canonical densities
(see \S\,\ref{phi}).

\subsection{Probabilities and Independence}\label{PI}
We shall speak of probabilities when, on the contrary, $\eta \in K^\times \setminus \mu(K)$ is fixed and 
when $p\to\infty$ is the variable; but to avoid trivial cases giving obvious obstructions (as $\eta\in\Q^\times$ 
for which $o_p(\eta) \mid p-1$ for any $p$ regardless of the residue degree of $p$; see \S\,\ref{obstructions} 
for more examples), we must put some assumptions on $\eta$ so that $o_p(\eta)$ can have any 
{\it possible value} dividing $p^{n_p}-1$ (by reference to Theorem \ref{thm}, Remark \ref{probas}, 
and Theorem \ref{log} giving moreover theoretical limitations for the orders, 
so that the true probabilities are significantly lower). 

\medskip
Let $H$ be the decomposition group of a prime ideal ${\mathfrak p}_0 \mid p$, $p$ unramified in $K/\Q$. 
Considering $F_{{\mathfrak p}_0}^\times$ as a $H$-module ($H$ is generated by the global 
Frobenius $s=s_{{\mathfrak p}_0}$ which by definition makes sense in $F_{{\mathfrak p}_0}/ \F_p$), 
$\prd_{{\mathfrak p} \mid p} F_{\mathfrak p}^\times$ is the induced representation and we get 
$\prd_{{\mathfrak p} \mid p} F_{\mathfrak p}^\times = \plus_{\sigma \in G/H} \sigma F_{{\mathfrak p}_0}^\times$
where $\sigma F_{{\mathfrak p}_0}^\times = F^\times_{{{\mathfrak p}^\sigma_0}}$ for all $\sigma \in G/H$
(using additive notation for convenience).

\medskip\noindent
In the same way, the representation $\langle \eta \rangle_G^{}$ can be written
$\langle \eta \rangle_G^{} = \!\!\sm_{\sigma \in G/H} \!\!\sigma  \langle \eta \rangle_H^{}$,
where $\langle \eta \rangle_H^{}$ is the multiplicative $\Z[H]$-module generated by $\eta$. 
So, for natural congruential reasons (that must be valid regardless of the prime $p$) concerning the map 
$\eta \!\pmod p \mapsto (\eta_{\mathfrak p})_{{\mathfrak p} \mid p}$, the representation 
$\langle \eta \rangle_G^{}$ must be induced by the $H$-representation $\langle \eta \rangle_H^{}$, i.e., we must have
$$\langle \eta \rangle_G^{} = \plus_{\sigma \in G/H} \sigma \langle \eta \rangle_H^{}$$
(otherwise, any nontrivial $\Z$-relation between the conjugates of $\eta$ will give non-independent variables 
$\eta_{\mathfrak p}$ in a probabilistic point of view). 
Since any cyclic subgroup $H$ of $G$ is realizable as a decomposition group when $p$ varies, 
the above must work for any $H$; taking $H=1$, we obtain that 
$\langle \eta \rangle_G^{} = \plus_{\sigma \in G} \langle \eta^\sigma \rangle_\Z^{}$
which is equivalent for $\langle \eta \rangle_G^{}$ to be of $\Z$-rank $n$, giving the following heuristic
in relation with the properties of the normalized $p$-adic regulator of $\eta$ studied in \cite{Gr1}.

\begin{heuristic} \label{heur1} {\it Let $K/\Q$ be Galois of degree $n$, of Galois group~$G$. 
Consider $\eta \in K^\times$ and, for any prime number $p \gg 0$, unramified in $K/\Q$ 
and prime to~$\eta$, let $(\eta_{\mathfrak p})_{{\mathfrak p} \mid p}$ be the diagonal 
image of $\eta$ in $\prod_{{\mathfrak p} \mid p} F_{\mathfrak p}^\times$. 

\noindent
The components $\eta_{\mathfrak p}$ are {\it independent}, in the meaning that for given 
$a_{\mathfrak p} \in F_{\mathfrak p}^\times$,
$${\rm Prob} \big (\eta_{\mathfrak p}= a_{\mathfrak p},\ \forall \,{\mathfrak p}\mid p \big) = 
\hbox{$\prod_{{\mathfrak p}\mid p}$}\  {\rm Prob} \big(\eta_{\mathfrak p}= a_{\mathfrak p} \big), $$
if and only if $\eta$ generates a multiplicative  $\Z[G]$-module of $\Z$-rank $n$.}
\end{heuristic}

\subsection{Remarks and examples}\label{obstructions}
Suppose that $\eta$ generates a multiplicative $\Z[G]$-module of $\Z$-rank $n$, which has obvious 
consequences (apart the fact that $\eta \notin \mu(K)$):

\smallskip
(i) This implies that $\eta$ is not in a strict subfield $L$ of $K$; otherwise, if $H$ is a non-trivial cyclic
subgroup of $G$ (hence of order $h>1$) such that $L \subseteq K^H$, for any unramified prime $p$ 
such that $H$ is the decomposition group of ${\mathfrak p}\mid p$ with Frobenius~$s$, 
$o_{\mathfrak p}(\eta)$ is not a random divisor of $p^{h}-1$ but  a divisor of $p-1$, 
the residue field of $K^H$ at ${\mathfrak p}$ being $\F_p$  for infinitely many~$p$.

\smallskip
(ii) In the same way, $\eta$ cannot be an element of $K^\times$ of relative norm $1$ in 
$K/K^H$, $H \ne 1$, because of the relation ${\rm N}_{K/K^H}( \eta) = 1$ giving 
$$\eta^{p^{h-1}+ \cdots + p+1} \equiv 1 \pmod {{\mathfrak p}}. $$
For the unit $\eta = 2\sqrt 2+3$ and any $p$ inert in $\Q(\sqrt 2)$, we obtain 
$\eta^{p+1} \equiv 1 \pmod p$ (i.e., $o_p(\eta) \mid p+1$), giving infinitely many $p$ 
such that $o_p(\eta)<p$:

\smallskip\noindent
$(p, o_p(\eta)) =$ 

\smallskip \noindent
$(29,10), (59,20), (179,36), (197,18), (227,76), (229,46), (251,84), (269,30)$, 

\smallskip \noindent
$(293,98), (379,76), (389,78), (419,140), (443,148), \ldots$

\bigskip
(iii) Let $K=\Q(j, \sqrt[3] 2)$, where $j$ is a primitive third root of unity, and let $\eta = \sqrt[3] 2 -1$ 
(a unit of $\Q(\sqrt[3] 2)$); for the same reason with $H = {\rm Gal}(K/\Q(j))$,
from $\eta^{s^2+s+1} =1$, we get, for any prime $p$ inert in $K/\Q(j)$, 
$$\eta^{p^2+p+1} \equiv 1\! \pmod p$$ 
(for $p=7$, $\eta$ is of order $19$ modulo~$p$ and we have infinitely many $p$ such that 
$o_p(\eta) \mid p^2+p+1$). 

\smallskip
In such a non-Abelian case, some relations of dependence can also occur in a specific 
component $F_{\mathfrak p}^\times$, ${\mathfrak p} \mid p$.
Since $\eta= \sqrt[3] 2 -1 \in \Q(\sqrt[3] 2)$, for any ${\mathfrak p}$ inert in $K/\Q(\sqrt[3] 2)$ 
(i.e., $K^H = \Q(\sqrt[3] 2)$, in which case, $p$ splits in $K/\Q(j)$), there exists
a rational $a$ such that $\sqrt[3] 2 \equiv a \pmod {\mathfrak p}$, $\sqrt[3] 2 \equiv 
a \,j^2 \pmod {{\mathfrak p}^s}$, $\sqrt[3] 2 \equiv a\,j \pmod {{\mathfrak p}^{s^2}}$. 
So $\eta \equiv a-1 \pmod {\mathfrak p}$ and $o_{\mathfrak p}(\eta) \mid p-1$ which is 
not necessary true for $o_p(\eta)$:
for $p=5$ we have $\sqrt[3] 2 \equiv 3 \pmod {\mathfrak p}$, $\sqrt[3] 2 \equiv 3j^2 \pmod {{\mathfrak p}^s}$, 
$\sqrt[3] 2 \equiv 3j \pmod {{\mathfrak p}^{s^2}}$. 
Then $\eta \equiv 2 \pmod {\mathfrak p}$ is of order $4$ modulo ${\mathfrak p}$, but $\eta \equiv 3j^2-1 
\pmod {{\mathfrak p}^s}$ is of order $8$ modulo~${\mathfrak p}^s$. So $o_p(\eta) = 8$
but we have some constraints on the $\eta_{\mathfrak p}$.

\subsection{Probabilities for the order of $\eta$ modulo $p$}\label{phi}
Now we suppose that the multiplicative $\Z[G]$-module $\langle \eta \rangle_G^{}$ is of $\Z$-rank $n$.

\begin{remark}{\rm
From Theorem \ref{thm}, we know that $o_p(\eta) \nmid D_{n_p, \delta}(p)$ for all $\delta \mid n_p$, when 
$p\to\infty$; in particular, $o_p(\eta) \nmid p- 1$ if we assume $n_p > 1$. 
For this, the hypothesis on the $\Z$-rank of $\langle \eta \rangle_G^{}$ is fundamental. 
In other words, the probability of some (unbounded) orders is zero.  This is strengthened by Remark \ref{probas}.
Moreover, Theorem \ref{log} gives obstructions for very small orders, which decreases the probabilities 
of small orders; the total defect of probabilities is less than $O({\rm log}(p))$ and is to be distributed 
among all orders, which is negligible.
Thus, this favors large orders which are more probable; this goes in the good direction because we shall 
study probabilities of orders $o_p(\eta)$ less than $p$ when $n_p > 1$.}
\end{remark}

 Although the theoretical values of the probabilities are rather intricate,
in a first approach, we can neglect these aspects and give some results in an heuristic 
point of view corresponding to the case where $\eta$ is considered as a variable (so that 
probabilities coincide with known densities) and we use the heuristic that when $\eta$ is fixed 
once for all, probabilities are much lower than densities as $p \to \infty$ as explained in \S\,\ref{PI}.
Furthermore, we shall use rough majorations (except in the quadratic case and $n_p=2$,
where densities are exact).

\smallskip
If $D \mid p^{n_p}-1$, $o_p(\eta) \mid D$ is equivalent to $\eta_{\mathfrak p}^D =1$ for all ${\mathfrak p} \mid p$. 
So we obtain 
$${\rm Prob} \big (o_p(\eta)=D \big ) \leq {\rm Prob}\big (o_p(\eta) \!\mid\! D \big )
= \prd_{{\mathfrak p} \mid p} {\rm Prob}\big (\eta_{\mathfrak p}^D = 1\big ) \  \hbox{(cf. Heuristic \ref{heur1}).}$$ 
Since $F_{\mathfrak p}^\times$ is cyclic of order $p^{n_p}-1$, we get 
$${\rm Prob}\big (\eta_{\mathfrak p}^D = 1\big ) = \sm_{d \mid D}  \frac{\phi(d)}{p^{n_p}-1} =  \frac{D}{p^{n_p}-1},$$
where $\phi$ is the Euler function, and we obtain, for any $D \mid p^{n_p}-1$, 
$${\rm Prob}\big (o_p(\eta)=D\big ) \leq \Big(\frac{D} {p^{n_p}-1} \Big)^{g_p}. $$

\noindent
If $g_p=1$, then $n_p=n$, and we can replace this inequality by 
$${\rm Prob}\big (o_p(\eta)=D\big ) \leq 
{\rm Density}\big (o_p(\eta)=D\big ) = \frac{\phi(D)} {p^{n}-1}.$$
When $g_p >1$, the exact expression is more complicate since $o_p(\eta) = D$ if and only if 
$o_{{\mathfrak p}_0}(\eta_{{\mathfrak p}_0}) = D$ for at least one ${\mathfrak p}_0 \mid p$ and 
$o_{\mathfrak p}(\eta_{\mathfrak p}) \mid D$ for all ${\mathfrak p} \mid p$, 
${\mathfrak p} \ne {\mathfrak p}_0$, but we shall not need it.

\section{Probabilities of orders $o_p(\eta)<p$}\label{sect6}

 Suppose $p \gg 0$, non totally split in $K/\Q$.
 In \cite{Gr1}, the number $\eta$ is a fixed {\it integer} of $K^\times$ and we have to consider the set
$$I_p(\eta) :=\big \{1, [\eta ]_p, \ldots, [\eta^k ]_p, , \ldots , [ \eta^{p-1} ]_p \big\}, $$
where $[\, \cdot \,]_p$ denotes a suitable residue modulo $p\,Z_K$.
We need that $I_p(\eta)$ be a set with $p$ distinct elements, to obtain valuable statistical results 
on the ``local regulators $\Delta_p^\theta(z)$'', $z \in I_p(\eta)$, to strengthen some important heuristics;
this condition is equivalent to $\eta^k \not \equiv 1 \pmod p$ for all $k=1, \ldots, p-1$, hence to $o_p(\eta)>p$.

\medskip
So we are mainely interested by the computation of ${\rm Prob} \big (o_p(\eta)<p \big )$ when $n_p >1$ and
we intend to give an upper bound for this probability. As we know from Theorem \ref{thm}, 
taking the example of quadratic fields we have, for $\langle \eta \rangle_G^{}$ of $\Z$-rank $2$,
$$o_p(\eta) \nmid p-1\ \ \& \ \ o_p(\eta) \nmid p+1, \ \hbox{for $p\to\infty$; } $$
but $o_p(\eta)< p$ remains possible for small divisors $D$ of $p^{2}-1$ (e.g., $\eta=5+ \sqrt{-1}$ 
for which $p=19$ is inert in $\Q(\sqrt{-1})$ and $o_{19}(\eta) = 3 \cdot 5$ whereas $p-1=18$ and $p+1=20$).

\smallskip
Suppose that $K\ne \Q$ and that the residue degree of $p$ is $n_p>1$. Let 
$${\mathcal D}_p := \{D : \  D \mid p^{n_p}-1,  D<p, \ D \nmid D_{n_p, \delta}(p) \  \forall \, \delta \mid n_p\}. $$

Then we consider that we have, for all $p \gg 0$ of residue degree $n_p$, 
the following heuristic inequality (from Theorems \ref{thm}, \ref{log} \& \S\,\ref{phi}):
\begin{equation}\label{proba}
\begin{aligned}
{\rm Prob}\big (o_p(\eta)  < p\big ) & \leq {\rm Prob}\big (o_p(\eta) \in {\mathcal D}_p\big ) \\
& \leq  \sm_{D \in {\mathcal D}_p}\Big(\frac{D} {p^{n_p}-1} \Big)^{g_p}  =
\frac{1} {(p^{n_p}-1)^{g_p}} \sm_{D \in {\mathcal D}_p} D^{g_p}.  
\end{aligned}
\end{equation}

A trivial upper bound for $\ds \sm_{D \in {\mathcal D}_p} D^{g_p}$ is 
$\ds \sm_{k=1}^{p-1} k^{g_p} = O(1) \, p^{g_p+1}$, giving
$${\rm Prob}\big (o_p(\eta) < p\big ) \leq \frac{O(1)}{p^{g_p(n_p-1) -1}}$$ 
for which the application of  the Borel--Cantelli heuristic supposes the inequality 
$g_p(n_p-1) \geq 3$, giving possible obstructions for quadratic or cubic fields with $p$ inert, 
and quartic fields with $n_p =2$. 
Of course, if $g_p(n_p-1)$ increases, the heuristic becomes trivial and we can replace
${\rm Prob}\big (o_p(\eta) < p\big)$ by ${\rm Prob}\big (o_p(\eta) < p^\kappa \big)$, for some $\kappa>1$ 
(see Remark \ref{rema} (i)).
But we can remove the obstructions concerning the cubic and quartic cases using 
an analytic argument suggested by G. Tenenbaum:

\begin{theorem}\label{thmfond}
 Let $K/\Q$ be Galois of degree $n \geq 2$, of Galois group $G$, and let 
$\eta\in K^\times$ be such that the multiplicative $\Z[G]$-module generated by $\eta$ is of $\Z$-rank $n$.
For any prime number $p$, let $g_p$ be the number of prime ideals ${\mathfrak p} \mid p$ and let $n_p$ 
be the residue degree of $p$ in $K/\Q$. 

\smallskip\noindent
Then, under the above heuristic inequality \eqref{proba}, for all unramified $p \gg 0$ 
such that $n_p>1$, we have (where ${\rm log}_2 = {\rm log}\circ {\rm log}$)
$${\rm Prob} \big (o_p(\eta)<p \big) \leq \frac{1}{p^{g_p\,(n_p-1) - \varepsilon}}, \ \ 
\hbox{with $\,\varepsilon = O \ds\Big( \frac{1}{{\rm log}_2(p)}\Big)$ } . $$
\end{theorem}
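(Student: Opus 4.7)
The plan is to start from the heuristic inequality \eqref{proba} and sharpen the trivial bound $\sm_{D \in {\mathcal D}_p} D^{g_p} \leq \sm_{k=1}^{p-1} k^{g_p} = O(p^{g_p+1})$ used just above the statement. The key observation is that ${\mathcal D}_p$ is not an arbitrary subset of $\{1,\ldots,p-1\}$ but consists only of divisors of $p^{n_p}-1$, and for almost all $p$ this number has far fewer divisors than $p$; this is presumably the analytic input suggested by Tenenbaum.

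First I would bound each summand crudely: since every $D \in {\mathcal D}_p$ satisfies $D<p$, we have $D^{g_p} < p^{g_p}$, hence
$$\sm_{D \in {\mathcal D}_p} D^{g_p} \ \leq\ \#{\mathcal D}_p \cdot p^{g_p} \ \leq\ \tau(p^{n_p}-1)\, p^{g_p},$$
where $\tau$ denotes the divisor-counting function. Next I would invoke the classical Wigert/Ramanujan upper bound
$${\rm log}\, \tau(N) \ \leq\ \big({\rm log}(2)+o(1)\big)\,\Frac{{\rm log}(N)}{{\rm log}_2(N)},$$
equivalently $\tau(N) = N^{O(1/{\rm log}_2(N))}$. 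Applied to $N=p^{n_p}-1$, and using that $n_p$ is bounded above by the fixed degree $n=[K:\Q]$ (so ${\rm log}(N) \leq n\,{\rm log}(p)$ and ${\rm log}_2(N) = {\rm log}_2(p)+O(1)$), this yields $\tau(p^{n_p}-1) \leq p^{O(1/{\rm log}_2(p))}$.

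Substituting back into \eqref{proba} then gives
$${\rm Prob}\big(o_p(\eta) < p\big) \ \leq\ \frac{\tau(p^{n_p}-1)\, p^{g_p}}{(p^{n_p}-1)^{g_p}} \ \leq\ \frac{1}{p^{g_p(n_p-1)-\varepsilon}},$$
with $\varepsilon = O(1/{\rm log}_2(p))$ absorbing both the divisor bound and the harmless discrepancy between $p^{n_p}-1$ and $p^{n_p}$ (using $n_p \geq 2$).

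There is really no substantial obstacle beyond a clean invocation of Wigert's theorem on top of \eqref{proba}. The only point that deserves care is verifying that all implicit $O$-constants depend solely on the degree $n$ (through $n_p \leq n$) and not on $\eta$ or $p$, so that the resulting $\varepsilon$ is a genuinely absolute function of $p$ tending uniformly to $0$; this makes the bound strong enough to feed into a Borel--Cantelli argument in the cases $g_p(n_p-1) \geq 2$ where the trivial exponent $g_p(n_p-1)-1$ would have been borderline.
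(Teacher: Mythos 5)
Your proposal is correct and is essentially the paper's own argument: you bound each $D\in{\mathcal D}_p$ by $p$ and the cardinality of ${\mathcal D}_p$ by $\tau(p^{n_p}-1)$, then invoke the classical divisor bound $\tau(m)\leq m^{c/{\rm log}_2(m)}$ (Wigert's theorem, which is exactly the Tenenbaum reference used in the paper), and absorb the discrepancy between $p^{n_p}-1$ and $p^{n_p}$ into $\varepsilon=O(1/{\rm log}_2(p))$. Nothing further is needed.
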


\begin{proof}
Let $S_p := \sm_{D \in {\mathcal D}_p} D^{g_p}$; under the two conditions $D \mid p^{n_p}-1$, $D<p$, we get 
$S_p < \sm_{D \mid p^{n_p}-1} \Big(\frac{p}{D} \Big)^{g_p} D^{g_p} = p^{g_p} \cdot \tau(p^{n_p}-1)$,
where $\tau(m)$ denotes the number of divisors of the integer $m$. From \cite[Theorem I.5.4]{T},
we have, for all $c>{\rm log}(2)$ and for all $m \gg 0$, 
$$\tau(m) \leq m^{\frac{c}{{\rm log}_2(m)}}. $$
Taking $c=1$ and $m=p^{n_p}-1 < p^{n_p}$, this leads to $S_p < p^{g_p + \frac{n_p} {{\rm log}_2 (p^{n_p} - 1)}}$
for all $p \gg 0$. Thus

\medskip
$\hspace{1cm} \ds {\rm Prob}(o_p(\eta)<p) \leq \frac{S_p}{(p^{n_p}-1)^{g_p}} 
\leq \frac{1}{(p^{n_p}-1)^{g_p} \cdot p^{-g_p - \frac{n_p} { {\rm log}_2(p^{n_p}-1) }}}$

$\hspace{3.6cm}\ds = \frac{1}{p^{g_p\,(n_p-1) - O\big (\frac{1}{{\rm log}_2(p)} \big )}} $. 
\end{proof}

To apply the Borel--Cantelli heuristic giving the finiteness of primes $p$ such that $o_p(\eta) < p$, 
we must have the inequality $g_p\,(n_p-1) \geq \varepsilon+1$, hence $g_p\,(n_p-1)\geq 2$.
Otherwise, we get $g_p=1 \ \&\  n_p =2$, not sufficient to conclude for quadratic fields with $p$ inert 
since, in this case, 
$${\rm Prob}(o_p(\eta)<p) \leq \frac{1}{p^{1-\varepsilon}} \ \ 
\hbox{with $\varepsilon =  O\big(\frac{1}{{\rm log}_2(p)}\big)$.}$$

\begin{remarks}\label{rema}{\rm  (i) Still when $g_p\,(n_p-1) \geq 2$, we can replace the previous
inequality ${\rm Prob}\big (o_p(\eta) < p\big ) \leq \frac{1}{p^{g_p\,(n_p-1) - \varepsilon}}$ by 

\centerline{$\ds {\rm Prob}\big (o_p(\eta) < p^\kappa \big ) \leq \frac{1}{p^{g_p\,(n_p-1) - \varepsilon}}$}

\smallskip\noindent
which is true for any real $\kappa$ such that $1\leq \kappa \leq n_p - \frac{1 + \varepsilon}{g_p}$, 
in which case the Borel--Cantelli heuristic applies and may have some interest for large $n_p$; 
for instance, if $K=\Q_r$ is the subfield of degree $\ell^r$ ($\ell$ a prime, $r \geq 1$), of the cyclotomic 
$\Z_\ell$-extension of $\Q$, and if we take primes $p$ totally inert in $K/\Q$, one can take 
$\kappa = \ell^r-2$ (if $\ell^r \ne 2$) for any $\eta$ as usual.

\smallskip
(ii) Note that the proof of the theorem does not take into account the conditions 
$o_p(\eta) \nmid D_{n_p, \delta}(p)$ and it should be interesting to improve this aspect. 
But this theorem is a first step, and in the next sections, we intend to use explicitely the set 
${\mathcal D}_p$ for numerical computations and for a detailed study of the 
more ambiguous quadratic fields case.
 Indeed, in this case, we have to estimate the more precise 
upper bound $\frac{1}{p^2-1}\sum_{D \in {\mathcal D}_p} \phi(D)$
and a numerical experiment with the following PARI program (from \cite{PARI2})
shows a great dispersion of the number $N$ of such divisors:

\smallskip \footnotesize
\begin{verbatim}
 {b=10^5; B=b+10^3; forprime(p=b, B, N=0; my(e=kronecker(-4,p)); 
 F1=factor(2*(p-e)); F2=factor((p+e)/2); P=concat(F1[,1],F2[,1]); 
 E=concat(F1[,2],F2[,2]); forvec(v=vectorv(# E,i,[0,E[i]]), my(d=factorback(P,v)); 
 if(d>=p-1, next); if((p-1)%d!=0 && (p+1)%d!=0, N=N+1)); print(p," ", N))}
\end{verbatim}

\normalsize\noindent
giving for instance (depending on the factorizations of $p-1$ and $p+1$):

\medskip
$p=100237$,  $N=3$, where $(p-1) \cdot (p+1)= (2^2\cdot 3 \cdot 8353) \cdot (2 \cdot 50119)$,

\smallskip
$p=100673$, $N=489$, where $(p-1) \cdot (p+1)= 
(2^6\cdot 11^2 \cdot 13) \cdot (2 \cdot 3^2 \cdot 7 \cdot 17 \cdot 47)$.}
\end{remarks}

We shall return more precisely to the quadratic case in \S\,\ref{qcase}. 

\medskip
We can state to conclude this section:

\begin{conjecture}\label{mainconj}
Let $K/\Q$ be Galois of degree $n\geq 3$, of Galois group $G$. 
Let $\eta\in K^\times$ be such that the multiplicative $\Z[G]$-module 
generated by $\eta$ is of $\Z$-rank~$n$.  
For any unramified prime $p$, prime to $\eta$, let $o_p(\eta)$ 
be the order of $\eta$ modulo~$p$. 

\smallskip\noindent
Then $o_p(\eta)>p$, for all $p$ non totally split in $K/\Q$, except a finite number. 

\noindent
More generaly, $o_p(\eta)>p^{(n_p-1)+ 1-  \frac{n_p}{n} \cdot (1+\varepsilon)}$, 
for all $p$ such that $n_p > 1$, except a finite number.
\end{conjecture}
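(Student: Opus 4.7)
The plan is to show that the conjecture follows immediately from Theorem \ref{thmfond} and Remark \ref{rema} (i) combined with the Borel--Cantelli heuristic, so the task reduces to arithmetic bookkeeping on $(n_p, g_p)$ and to checking that the resulting series converges in both formulations.

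First I would verify the purely combinatorial fact that, under the hypotheses $n \geq 3$ and $n_p > 1$, one always has $g_p(n_p - 1) \geq 2$. If $n_p \geq 3$ this is immediate from $g_p \geq 1$. If $n_p = 2$, the divisibility $n_p \mid n$ together with $n \geq 3$ forces $n \geq 4$, hence $g_p = n/n_p \geq 2$, and again $g_p(n_p - 1) \geq 2$. With this in hand Theorem \ref{thmfond} yields
\[
{\rm Prob}\big(o_p(\eta) < p\big) \leq \frac{1}{p^{\,g_p(n_p-1) - \varepsilon}} \leq \frac{1}{p^{\,2 - \varepsilon}}, \qquad \varepsilon = O\!\Big(\tfrac{1}{{\rm log}_2 p}\Big),
\]
uniformly in the unramified primes $p$ with $n_p > 1$.

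Next I would apply the Borel--Cantelli heuristic. Since the residue degree $n_p$ takes only finitely many values $h \mid n$, and for each fixed $h > 1$ the subsum $\sum_{p:\,n_p = h} 1/p^{g_h(h-1)-\varepsilon}$ is dominated by the convergent series $\sum_p 1/p^{2-\varepsilon}$, the set of $p$ with $o_p(\eta) < p$ and $n_p > 1$ should be finite, which is the first assertion. For the sharper statement, I would invoke Remark \ref{rema} (i): it allows one to replace $p$ by any $p^\kappa$ with $1 \leq \kappa \leq n_p - (1+\varepsilon)/g_p$ on the left-hand side of \eqref{proba} without changing the right-hand side. Substituting $g_p = n/n_p$ turns the upper endpoint into $n_p - \frac{n_p}{n}(1+\varepsilon) = (n_p - 1) + 1 - \frac{n_p}{n}(1+\varepsilon)$, which is exactly the exponent appearing in the conjecture; applying Borel--Cantelli in each residue-degree class then yields the same finiteness conclusion for this sharper lower bound.

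The hard part will not be the arithmetic but the justification of the heuristic. The argument treats the Frobenius-twisted residues $(\eta_{\mathfrak p})_{{\mathfrak p}\mid p}$, as $p$ varies, as a probabilistic model, and it is the conjunction of Heuristic \ref{heur1} with the Borel--Cantelli principle applied to these correlated events that remains unverified. Turning the proposal into a theorem would require an unconditional equidistribution statement for $\eta$ modulo $\mathfrak{p}$ as $\mathfrak{p}$ ranges over primes of a fixed residue degree, presumably in the spirit of Lang--Trotter type conjectures, which is at present out of reach.
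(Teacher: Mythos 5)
Your derivation is exactly the paper's own justification of this statement: Theorem \ref{thmfond} (itself conditional on the heuristic inequality \eqref{proba}) combined with the Borel--Cantelli heuristic, the observation that $n\geq 3$ together with $n_p>1$ forces $g_p(n_p-1)\geq 2$, and Remark \ref{rema} (i) with $g_p=n/n_p$ yielding precisely the exponent $(n_p-1)+1-\frac{n_p}{n}\,(1+\varepsilon)$. The statement is a conjecture and the paper offers nothing beyond this same heuristic chain, so your closing caveat --- that Heuristic \ref{heur1}, \eqref{proba} and Borel--Cantelli remain unproved and an unconditional proof is out of reach --- accurately reflects the status of the result.
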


\section{Numerical evidences for the above conjecture}\label{sect7}

This section is independent of any $\eta \in K^\times$ and any number field $K$ but depends 
only on {\it given and fixed} integer parameters denoted by abuse $(n_p, \, g_p)$.
For $n_p >1$, we explicitely compute, for any $p$, $S_p := \sm_{D\in {\mathcal D}_p} D^{g_p}$
and the upper bound 
$\ds \frac{S_p}{(p^{n_p}-1)^{g_p}}$ of  $\ds {\rm Prob}\big (o_p(\eta) < p\big)$, using the program 
described below. Recall that 
$${\mathcal D}_p := \{D : \   D \mid p^{n_p}-1, \  D<p, \  D \nmid D_{n_p, \delta}(p) 
\  \forall \, \delta \mid n_p\}. $$

\subsection{General program about the divisors $D\in {\mathcal D}_p$}\label{program}
It is sufficient to precise the integers $n_p>1$, $g_p\geq 1$, and the interval $[b, B]$ of primes $p$.
The program gives the least value $C_b^B$ of $C(p)$, $p\in [b, B]$, where
$$\frac{S_p}{(p^{n_p}-1)^{g_p}} =: \frac{1}{p^{C(p)}}.$$
The favourable cases for the Borel--Cantelli principle are those with $C_b^B>1$, but the inequalities
$\ds C_b^B \geq C_b^\infty := {\rm Inf}_{p\in [b, \infty]} C(p)$
do not mean that the Borel--Cantelli principle applies since we ignore if $C_b^\infty>1$ 
or not for $b \gg 0$, because $C_b^\infty$ is an increasing function of $b$. 

\smallskip
In the applications given below, $n_p$ is a prime number, for which 
$D_{n_p, \delta}(p)$ is in $\{p-1, \, p^{n_p-1}+ \cdots + p+1\}$;
for more general values of $n_p$, one must first compute the set ${\mathcal D}_p$ as defined in 
Theorem \ref{thm}.

\smallskip\footnotesize
\begin{verbatim}
 {b=10^6; B=10^7; gp=1; np=2; CC= gp*(np-1)+1; C=CC; V=vector(B,i,i^gp);
 forprime(p=b, B, my(S=0, M=p^np-1, m=p-1, mm=M/m, i); 
 fordiv(M,d,if(d>p, break); if(m%d!=0 && mm%d!=0, S+=V[d])); 
 if(S!=0, C=(gp*log(M)-log(S))/log(p)); if(C<CC, CC=C)); print(CC)}
\end{verbatim}
 
\normalsize
\smallskip\noindent
The initial $CC:= g_p\,(n_p-1)+1 \geq 2$ is an obvious upper bound for $C_b^B$.

\subsection{Application to quadratic fields with $n_p=2$}  
We have $g_p=1$.
We obtain $C \approx 0.56402...$ for $10^6 \leq p \leq 10^7$, then $C \approx 0.58341...$ for 
$10^7 \leq p \leq 10^8$, and $C \approx 0.58326...$ for $10^8 \leq p \leq 10^9$.
For larger primes $p$ it seems that the constant $C$ stabilizes. 
If we replace $D$ by $\phi(D)$ the result is a bit better (e.g., $C\approx 0.64766...$ 
instead of $0.56402...$ for  $10^6 \leq p \leq 10^7$). 

\smallskip
The local extremum of $C$ are obtained
by primes $p$, like $166676399$, $604929599$, $1368987049$, $1758415231$, for which
$p^2-1$ is ``friable'' (product of small primes; see the computations that we shall 
give in \S\,\ref{qcase}).

\subsection{Application to cyclic cubic fields with $n_p=3$}
We use the program with $g_p=1$, $n_p=3$.
For instance, for $10^6 \leq p \leq 10^7$, we get $C\approx 1.5652... >1$ as expected
from Theorem \ref{thmfond}; for $10^7\leq p \leq  10^8$ the value of $C$ is $1.5399325...$
and for $10^8 \leq p \leq  3.65717251 \cdot10^8$, we get $C\approx 1.5809...$.

\subsection{Application to quartic fields with $n_p=2$ and for $n_p=4$}
For $g_p=2$, $n_p=2$, and $10^6 \leq p \leq 10^7$, we get $C=1.6103...$; for 
$10^8 \leq p \leq 10^9$, the result for $C$ is $1.6186...$. 

\smallskip
Naturally, for $n_p=4$ we obtain a larger constant $C=2.4596...$. But in the case $n_p=4$ we can test 
the similar stronger condition ${\rm Prob}\big (o_p(\eta) < p^2\big )$ for which one finds
$C = 1.28442...$, giving the conjectural finiteness of totally inert primes $p$ 
in a Galois quartic field such that $o_p(\eta) < p^2$.

\section{Numerical examples with fixed $\eta$ and $p\to\infty$}\label{sect8}

The above computations are of a density nature and the upper bound $\frac{1}{p^C}$ is much higher 
than the true probability. So we intend to take a fixed $\eta \in K^\times$, restrict ourselves to primes 
$p$ with suitable residue degree $n_p$, and compute the order of $\eta$ modulo $p$ to find 
the solutions $p$ of the inequality $o_p(\eta)<p$.

\smallskip
The programs verify that $\eta$ generates a multiplicative $\Z[G]$-module of  rank~$n$.
In the studied cases, $K/\Q$ is Abelian ($G= C_2$, $C_3$, $C_4$) and the condition on the rank is 
equivalent to $\eta^e\ne 1$ in $\langle \eta \rangle_G^{} \otimes \Q$, for all rational idempotents $e$ of $\Q[G]$.

\subsection{Cubic cyclic fields}
We then consider the following program with the polynomial $P=X^3+X^2-2\,X-1$ (see data in Section 3).
Put $\eta = a x^2+b x+c$; then $a$ is fixed and to expect more solutions, $b$, $c$ vary in 
$[-10, 10]$ and $p$ in $[3, 10^5]$:

\smallskip\footnotesize
\begin{verbatim}
 {P=x^3+x^2-2*x-1; x0=Mod(x,P); x1=-x0^2-x0+1; x2= x0^2-2;  
 Borne=10^5; a=1; for(b=-10, 10, for(c=-10, 10,   
 Eta0=a*x0^2+b*x0+c; Eta1=a*x1^2+b*x1+c; Eta2=a*x2^2+b*x2+c;  
 N=norm(Eta0); R1=Eta0*Eta1*Eta2; R2=Eta0^2*Eta1^-1*Eta2^-1;  
 if(R1!=1 & R2 !=1 & R1!=-1 & R2 !=-1, 
 forprime(p=1, Borne, if(p%N!=0, T=Mod(p,7)^2; if(T!=1, 
 A=Mod(a,p); B=Mod(b,p); C=Mod(c,p); Y=Mod(A*x^2+B*x+C, P); 
 my(m=p-1, mm=p^2+p+1); fordiv(m*mm, d, if(d>p, break);
 Z=Y^d; if(Z==1, print(a,"  ",b,"  ",c,"  ",p,"  ",d)))))))))}
\end{verbatim}

\normalsize
\smallskip\noindent
No solution is obtained except the following triples (the eventual multiples of $o_p(\eta)$ are not written):

\smallskip
$(a, b, c, p, o_p(\eta)) =$

\smallskip
$(1,  -7,  7,  {\bf 137, 56})$,  $(1,  -3,  3,  {\bf 37,  28})$, 
$(1,  4,  8,  {\bf 47,  37})$, $(1,  6,  -10,  {\bf 31, 18})$.

\smallskip
We have here an example ($\eta = x^2+4x+8$, $p=47$) where $o_p(\eta)=37$ divides 
$p^2+p+1 = 37\cdot 61$; 
this can be possible because $p$ is too small regarding $\eta^{s^2+s+1} = 1+8\,p = 377$ (see Lemma \ref{poly}).

\subsection{Quartic cyclic fields}
We consider the quartic cyclic field $K$ defined by the polynomial $P=X^4-X^3-6 X^2+X+1$ of discriminant $34^2$. 
The quadratic subfield of $K$ is $k = \Q(\sqrt {17})$ and $K = k\Big( \sqrt{(17+ \sqrt{17})/2} \ \Big)$. 
The program is analogous to the previous one with the parameters $n_p=g_p = 2$.
Put $\eta = a x^3+b x^2+c x +d$; then $b$, $c$, $d$ vary  in $[-10, 10]$, and $p$ in $[3, 10^5]$:

\medskip\footnotesize
\begin{verbatim}
 {P=x^4-x^3-6*x^2+x+1; x0=Mod(x,P); x1=-1/2*x0^3+3*x0+3/2;   
 x2=x0^3-x0^2-6*x0+1; x3=-1/2*x0^3+x0^2+2*x0-3/2;    
 Borne=10^5; a=1; for(b=-10, 10, for(c=-10, 10, for(d=-10, 10,   
 Eta0=a*x0^3+b*x0^2+c*x0+d; Eta1=a*x1^3+b*x1^2+c*x1+d;   
 Eta2=a*x2^3+b*x2^2+c*x2+d; Eta3=a*x3^3+b*x3^2+c*x3+d; N=norm(Eta0);   
 R1=Eta0*Eta1*Eta2*Eta3; R2=Eta0*Eta2^-1; R3=Eta0*Eta1^-1*Eta2*Eta3^-1;  
 if(R1!=1 & R2 !=1 & R3!=1 & R1!=-1 & R2 !=-1 & R3!=-1,   
 forprime(p=3, Borne, if(p%N!=0, if(issquare(Mod(p,17))==1, 
 u=sqrt(Mod(17, p)); v=(17+u)/2; if(issquare(v)==0,  
 A=Mod(a,p); B=Mod(b,p); C=Mod(c,p); D=Mod(d,p); Y=Mod(A*x^3+B*x^2+C*x+D,P); 
 my(m=p-1, mm=p+1); fordiv(m*mm, dd, if(dd>p, break); Z=Y^dd; 
 if(Z==1, print(a,"  ",b,"  ",c,"  ",d,"  ",p,"  ", dd)))))))))))}    
\end{verbatim}

\normalsize
\medskip
No solution is obtained except the following ones, where we consider 
at most a solution $(p, o_p(\eta))$ for a given $p$ (other solutions
may be given by conjugates of $\eta$ and/or by $\eta' \equiv \eta \pmod p$; 
many solutions with $p=19$ and the orders $12$ and $15$); we eliminate also the solutions
$(p, \lambda\,o_p(\eta))$, $\lambda >1$):

\medskip
$(a, b, c,d, p, o_p(\eta)) =$
$(1,  -10,  2,  -10,   {\bf 19,   12})$, $(1,  -10,  5,  -9,  {\bf 19, 15})$, 
 
$(1,  -9,  6,  9,  {\bf 43,   33})$, $(1,  -7,  -2,  -6,  {\bf 19, 8})$, $(1, -7,  2,  -8, {\bf 19, 10})$, 
 
$(1,  -8,  7,  7,  {\bf 461,   276})$,  $(1,  -4,  1,  8,  {\bf 1549,  1395})$, $(1,  -3,  0,  -6,  {\bf 223,   64})$,  

$(1,  -1,  -6,  -10,  {\bf 229,   184})$, 
$(1,  -1,  3,  -2,  {\bf 59,   40})$, $(1,  3,  -8,  6,  {\bf 53,   9})$, 

$(1,  3,  -5,  10,  {\bf 83,   21})$, $(1,  9,  -7,  5,  {\bf 43,   22})$.

\medskip
For the last three cases, the order divides $p+1$ for the same reason as above.
We have the more exceptional solution
$(1,  -4,  1,  8,  {\bf 1549,  1395})$ where $1395=9\cdot 5 \cdot 31$ 
with $9 \mid p-1$ and $5\cdot 31 \mid p+1$.

\subsection{Quadratic fields}
We consider the field $K$ defined by the polynomial $P=X^2-3$ and the following program with 
$\eta= a\,\sqrt 3 +b$, $a=1$, $b \in [-10, 0]$.

\medskip\footnotesize
\begin{verbatim}
 {m=3; P=x^2-m; x0=Mod(x,P); x1=-x0; a=1; Borne=10^5; 
 for(b=-10, 10, Eta0=a*x0+b; Eta1=a*x1+b; N=norm(Eta0); 
 R1=Eta0*Eta1; R2=Eta0*Eta1^-1; if(R1!=1 & R2 !=1 & R1!=-1 & R2 !=-1, 
 forprime(p=1, Borne, if(p%N!=0, 
 if(kronecker(m, p)==-1, A=Mod(a,p); B=Mod(b,p); Y=Mod(A*x+B,P);  
 my(m=p-1, mm=p+1); fordiv(m*mm, d, if(d>p, break);
 Z=Y^d; if(Z==1, print(a,"  ",b,"  ",p,"  ", d))))))))}
\end{verbatim} 

\normalsize
\medskip\noindent
For small primes $p$ there are solutions $o_p(\eta) \mid p-1$ or $o_p(\eta) \mid p+1$:

\medskip
$(a,\, b,\, p,\, o_p(\eta)) = $  

\smallskip
$(1,  -10, {\bf 79,  65})$, $(1,  -10 , {\bf 101, 75})$,  $(1,   -10, {\bf 967,   847})$, 

$(1,  -10 ,  {\bf 20359, 13234})$, $(1,  -10,    {\bf 90149,   72700})$, $(1,   -9,    {\bf 89,   55})$,  

$(1,  - 9,    {\bf 6163,   4623})$,  $(1,  -9,    {\bf 29501,   6705})$, $(1,  -8 ,   {\bf 10711,   2210})$, 

$(1,   -6,    {\bf 1123,   843})$,  $(1,  -5,    {\bf 86969,   81172})$, $(1,  -4,    {\bf 30941,   25785})$, 

$(1,  -9 ,   {\bf 41,   15})$, $(1,   -9,    {\bf 1301,   403})$, $(1,   -8,    {\bf 5,   3})$, $(1,   -7,    {\bf 29,   24})$, 

$(1,   -7,    {\bf 103,   39})$,  $(1,   -7,    {\bf 727,   143})$,   $(1,   -4,    {\bf 701,   675})$, $(1,   -3,    {\bf 43,   33})$.

\medskip
If Conjecture \ref{mainconj} is likely for degrees $n\geq 3$, the question arises for 
quadratic fields with $n_p=2$. 
We give here supplementary computations with the following simplified program which 
can be used changing $m, a, b$:

\medskip\footnotesize
\begin{verbatim}
 {m=3; a=5; b=2; Borne=10^9; forprime(p=1, Borne, if(kronecker(m, p)==-1, 
 A=Mod(a,p); B=Mod(b,p); P=x^2-m; Y=Mod(A*x+B, P);  my(e=kronecker(-4,p)); 
 F1=factor(2*(p-e)); F2=factor((p+e)/2); 
 P=concat(F1[,1],F2[,1]); E=concat(F1[,2],F2[,2]); 
 forvec(v=vectorv(# E,i,[0,E[i]]), my(d=factorback(P,v)); if(d>p, next); 
 Z=Y^d; if(Z==1, print(p,"  ",d)))))}
\end{verbatim}

\normalsize
\medskip
(i) For instance, if we fix $\eta=5\sqrt 3 +2$ and take larger primes inert in $\Q(\sqrt 3)$,
this gives the few solutions (up to $p\leq 10^9$):

\smallskip
$(p, o_p(\eta))=$ $(5, 4)$, $(29 ,  21)$, $(1063,  944)$, $(32707,   23384)$, $(90401,  68930)$. 

\medskip
(ii)  For $\eta=7\sqrt 3 +3$ we obtain the solutions  (up to $p\leq 10^9$):

\smallskip
$(p, o_p(\eta))=$ $(7, 6)$, $(29 ,  21)$, $(137,   92)$, $(7498769,   5927335)$, 

\hfill $(39208553, 31070928)$.

\noindent
The large solution $(p=39208553, o_p(\eta)=31070928)$ (where $p^2-1$ is friable)
is a bad indication for finiteness.

\medskip
(iii) Consider $K=\Q(\sqrt{-1})$ with $p\equiv 3 \pmod 4$ up to $p \leq 10^9$. 

\smallskip
For $\eta=\sqrt {-1} + 4$ (${\rm N}(\eta)= 17$), we obtain the solutions:

\smallskip
$(p, o_p(\eta)) = (49139, 19593)$, $(25646167,  22440397)$.

\smallskip
For $\eta=\sqrt {-1} + 2$ (${\rm N}(\eta)=5$), we obtain the solution:

\smallskip
$(p, o_p(\eta)) = (9384251, 6173850)$.  

\smallskip
For $\eta= 3\,\sqrt {-1} + 11$ (${\rm N}(\eta)=130$), we obtain the solutions:

\smallskip
$(p, o_p(\eta)) =$ $(3,   2)$, $(43,   11)$, $(131,   24)$, $(811,   174)$, $(911,   133)$, $(5743,   3168)$, 

\hfill $(2378711, 1486695)$.

\medskip
Although this kind of repartition of the solutions looks like the case of Fermat quotients, 
for which a specific heuristic can be used (see \cite{Gr2}), it seems that we observe more 
systematic large solutions in the quadratic case with $p$ inert, and we have possibly infinitely 
many solutions. This should be because the problem is of a different nature and is 
connected with generalizations of primitive roots problem in number fields 
(see the extensive survey by P. Moree \cite{Mo}). 

\smallskip
So we shall try in the next subsection to give some insights in the opposite direction for quadratic fields
(infiniteness of inert primes $p$ with $o_p(\eta) < p$).

\subsection{Analysis of the quadratic case}\label{qcase}
 Starting from the formula 
$${\rm Prob}\big (o_p(\eta) < p\big ) \leq {\rm Density}\big (o_p(\eta) < p\big ) =
\frac{1} {p^2-1} \sm_{D \in {\mathcal D}_p}\phi( D)$$

\noindent
of Remark \ref{rema}\ (iii), we study the right member of the normalized equality
$(p+1) \cdot {\rm Density}(o_p(\eta)<p) = \frac{1} {p-1} \sm_{D \in  {\mathcal D}_p}\phi( D)$,
remembering that it is an upperbound of the probability.
From numerical experiments, we can state:

\begin{conjecture}\label{conj}
{\it Let ${\mathcal D}_p$ be the set of divisors $D$ of $p^2-1$ such that $D<p$, 
$D\nmid p-1$, $D \nmid p+1$ (see Theorem \ref{thm}). We have the inequalities:
$$\frac{1}{3} \leq \frac{1} {p-1} \sm_{D \in  {\mathcal D}_p}\phi( D) 
< c(p) \, {\rm log}^2(p) , \ \ p\to\infty , $$

\noindent
where $c(p)$ is probably around $O({\rm log}_2(p))$. }
\end{conjecture}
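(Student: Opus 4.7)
The plan is to factor $p^2-1=(p-1)(p+1)$ and, writing $p-1=2^a u$ and $p+1=2^b v$ with $u,v$ odd and coprime, to parametrize every divisor $D$ of $p^2-1$ uniquely as $D=2^c w_1 w_2$ with $0\le c\le a+b$, $w_1\mid u$, $w_2\mid v$. From Theorem \ref{thm} together with this parametrization one reads off that $D\in\mathcal D_p$ iff $D<p$ and the triple $(w_1,w_2,c)$ avoids the two degenerate cases $(w_2=1,\,c\le a)$ and $(w_1=1,\,c\le b)$. Since $\phi(D)=\phi(2^c)\phi(w_1)\phi(w_2)$ by multiplicativity, both bounds reduce to truncated divisor sums on $u$ and $v$.

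\medskip

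\emph{Upper bound.} I would first use the chain
\[
\sum_{D\in\mathcal D_p}\phi(D)\ \le\!\!\sum_{D\mid p^2-1,\,D<p}\!\! D\ \le\ p\cdot\bigl|\{D\mid p^2-1:D<p\}\bigr|\ \le\ \tfrac12\, p\,\tau(p^2-1),
\]
so that, after dividing by $p-1$, the conjectured upper bound reduces to $\tau(p^2-1)=O(\log^2 p\cdot\log_2 p)$. Unconditionally, the Tenenbaum estimate already used in the proof of Theorem \ref{thmfond} only gives $\tau(p^2-1)=O\bigl(p^{1/\log_2 p}\bigr)$, which is much too weak. The target $(\log p)^{2+o(1)}$ reflects the heuristic that $p-1$ and $p+1$ behave like two independent ``random'' integers of size $p$, each carrying $(\log p)^{\log 2}$ divisors on average; the refined bound should thus come from moment estimates for $\tau$ on shifted primes, in the spirit of Ford's results on the multiplicative structure of $p\pm1$, applied to $\tau((p-1)(p+1))$.

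\medskip

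\emph{Lower bound.} The plan is to exhibit enough elements of $\mathcal D_p$ already inside the subfamily with $w_1,w_2>1$ and $c=0$. If $q$ denotes the smallest prime divisor of $u$, then each $D=(u/q)\cdot w_2$, $1<w_2\mid v$, belongs to $\mathcal D_p$ as soon as $D<p$, that is $w_2<qp/u$. Using $\sum_{w_2\mid v}\phi(w_2)=v$ and $\phi(u/q)\ge\phi(u)/q$, and symmetrically swapping the roles of $u$ and $v$ (plus averaging over $c\ge 1$), one should obtain
\[
\sum_{D\in\mathcal D_p}\phi(D)\ \ge\ \tfrac13(p-1)(1+o(1)),
\]
the constant $\tfrac13$ being extracted from a case analysis on whether $u$, $v$, or $2^{a+b}$ carries the bulk of $p^2-1$.

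\medskip

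\emph{Main obstacles.} The most delicate point is to secure the exact constant $\tfrac13$: already at $p=17$, where $p-1=2^4$ and $p+1=2\cdot 3^2$, one computes $\mathcal D_p=\{12\}$ and the ratio drops to $1/4$. Thus the inequality genuinely requires $p\to\infty$, and the proof must show that such degenerate configurations---in which $p\pm1$ is essentially a prime power---form a set of primes of density zero, which itself relies on non-trivial control of primes $p$ for which $p\pm1$ has very few prime factors. A second, probably deeper, difficulty is the $O(\log_2 p)$ factor in $c(p)$ for the upper bound: worst-case primes with highly composite $p^2-1$ escape $(\log p)^{O(1)}$ divisor bounds under current analytic technology, which is almost certainly the reason the statement is left as a conjecture.
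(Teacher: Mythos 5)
You should first note that the paper does not prove this statement at all: it is Conjecture~\ref{conj}, stated on the strength of numerical experiments (the PARI computations of the density, the local maxima of $C=\frac{p}{\log p}\cdot{\rm Density}$, and the empirical characterization of the equality case $p-1=2^{u+2}3^v$, $p+1=2\ell$). So there is no paper proof to match, and your text, quite properly, is a plan rather than a proof. Within that plan, the structural part is sound: the parametrization $D=2^cw_1w_2$ with $w_1\mid u$, $w_2\mid v$ ($p\mp1=2^{a,b}u,v$), the criterion $D\in\mathcal D_p\iff D<p$ and not $(w_2=1,c\le a)$ nor $(w_1=1,c\le b)$, and the observation that exactly half of the divisors of $p^2-1$ lie below $p$ (since none equals $p$) are all correct, as is your $p=17$ computation, which matches the exceptions $\{2,3,5,7,17\}$ listed in the paper.

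The genuine gaps are in both bounds. For the upper bound, the chain $\sum_{D\in\mathcal D_p}\phi(D)\le\sum_{D<p,\,D\mid p^2-1}D\le\frac12\,p\,\tau(p^2-1)$ reduces the conjecture to $\tau(p^2-1)\ll\log^2 p\,\log_2 p$, which is \emph{false} for infinitely many $p$: when $p^2-1$ is friable, $\tau(p^2-1)$ can be of size $p^{c/\log_2 p}$, and these are exactly the primes ($166676399$, $1758415231$, \dots) that the paper exhibits as producing the large local maxima of $C$. So this route cannot work uniformly; any proof must exploit the sizes of the divisors below $p$ (most of which are much smaller than $p$ in the friable case), not merely their number. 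For the lower bound, the family $D=(u/q)w_2$ requires $u/q>1$ and the truncation $w_2<qp/u$ can empty it entirely: in the extremal configuration $p-1=2^a3^v$, $p+1=2\ell$ with $\ell$ prime, your family contributes nothing, and the whole mass $\sum_{j\le v-1}\phi(2^{a+1}3^j)=(p-1)/3$ comes from the divisors with $c=a+1$ that you only gesture at via ``averaging over $c\ge1$''. Since this configuration is precisely where the conjectured constant $\frac13$ is attained, the unperformed ``case analysis'' is not a technicality but the entire content of the lower bound; as it stands, neither inequality is established, which is consistent with the statement remaining a conjecture in the paper.
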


\noindent
The majoration $\ds\frac{1} {p^2-1}\! \sm_{D \in  {\mathcal D}_p}  \phi( D) 
<  c(p) \cdot \frac{{\rm log}^2(p)}{p+1} \sim 
c(p) \cdot \frac{1}{p^{1 - 2 \cdot {\rm log}_2(p)/{\rm log}(p)}}$
is to be compared with the upper bound  $\frac{1}{p^{1-\varepsilon}}$ \big(with 
$\varepsilon = O\big(\frac{1}{{\rm log}_2(p)}\big)$\big) of Theorem \ref{thmfond}, 
but the sets of divisors $D \mid p^2-1$ are not the same and this information 
is only experimental. On the contrary, the minoration

\medskip
\centerline{$\ds\frac{1} {p-1} \sm_{D \in {\mathcal D}_p}\phi( D) \geq \frac{1}{3}$}

\smallskip\noindent
seems exact (except very few cases), and although the density ($\geq$ probability) is
$\frac{O(1)}{p}$, this suggests the possible infiniteness of inert $p$ such that $o_p(\eta) < p$ for 
fixed $\eta \in K^\times$ such that $\eta^{1+s}$ and $\eta^{1-s}$ are distinct from roots of unity.
Indeed, for $p \in \{2, 3, 5, 7, 17\}$, we get the strict inverse inequality 

\medskip
\centerline{$\ds\frac{1} {p-1} \sm_{D \in {\mathcal D}_p}\phi( D) < \frac{1}{3}$} 

\smallskip\noindent
and we have no other examples up to $10^9$. The equality 

\medskip
\centerline{$\ds\frac{1} {p-1} \sm_{D \in {\mathcal D}_p}\phi( D) = \frac{1}{3}$}

\smallskip\noindent
is doubtless {\it equivalent} to $p - 1 = 2^{u+2} \!\cdot 3^v \ \&\  p+1 = 2\cdot \ell$, 
for some $u \geq 0$, $v \geq 0$ and $\ell$ prime. 
To study this, one can use the following programs:

\smallskip
(i) Program testing the equality for any prime $p$.

\smallskip\footnotesize
\begin{verbatim}
 {b=1; B=10^9; forprime(p=b, B, my(S=0, e=kronecker(-4,p)); 
 F1=factor(2*(p-e)); F2=factor((p+e)/2); 
 P=concat(F1[,1],F2[,1]); E=concat(F1[,2],F2[,2]);  
 forvec(v=vectorv(# E,i,[0,E[i]]), my(d=factorback(P,v)); 
 if (d>p, next); if((p-1)%d!=0 && (p+1)%d!=0,
 S+= prod(i=1,# v, if(v[i],(P[i]-1)*P[i]^(v[i]-1),1)))); 
 if(3*S==p-1, print(p)))}
\end{verbatim}

\normalsize
\smallskip
(ii) Program giving the primes $p$ such that $p=1+2^{u+2}\! \cdot 3^v\ \&\ p=-1+ 2\cdot \ell$
(which are trivialy solutions).
We use the fact that it is easier to test in first the primality of $(p+1)/2$ for large $p$.

\smallskip\footnotesize
\begin{verbatim}
 {X=1; Y=1; T=1; J2=0; J3=0; K=0; L=listcreate(10^6); 
 while(T<10^1000, K=K+1; listput(L,T,K); 
 if(T==X, J2=J2+1; X=2*component(L,J2)); 
 if(T==Y, J3=J3+1; Y=3*component(L,J3)); 
 T=min(X,Y); p=1+T; if(isprime(p)==1, 
 my(S=0, e=kronecker(-4,p)); if(isprime((p+1)/2)==1, 
 F1=factor(2*(p-e)); F2=factor((p+e)/2); 
 P=concat(F1[,1],F2[,1]); E=concat(F1[,2],F2[,2]);
 forvec(v=vectorv(# E,i,[0,E[i]]), my(d=factorback(P,v));
 if (d>p, next); if((p-1)%d!=0 && (p+1)%d!=0,
 S+= prod(i=1,# v, if(v[i],(P[i]-1)*P[i]^(v[i]-1),1))));
 if(3*S==p-1, print(factor(p-1)," ",factor(p+1)," ",p)))))}
\end{verbatim}

\normalsize
\smallskip\noindent
We obtain the following solutions:

\smallskip\footnotesize
\begin{verbatim}
p-1             p+1
[2, 2; 3, 1]   [2, 1; 7, 1]    p=13
[2, 2; 3, 2]   [2, 1; 19, 1]    p=37
[2, 3; 3, 2]   [2, 1; 37, 1]    p=73
[2, 6; 3, 1]   [2, 1; 97, 1]    p=193
[2, 7; 3, 2]   [2, 1; 577, 1]    p=1153
[2, 5; 3, 4]   [2, 1; 1297, 1]    p=2593
[2, 2; 3, 6]   [2, 1; 1459, 1]    p=2917
[2, 11; 3, 6]  [2, 1; 746497, 1]    p=1492993
[2, 13; 3, 5]  [2, 1; 995329, 1]    p=1990657
[2, 16; 3, 4]  [2, 1; 2654209, 1]    p=5308417
[2, 20; 3, 3]  [2, 1; 14155777, 1]    p=28311553
[2, 20; 3, 8]  [2, 1; 3439853569, 1]    p=6879707137
[2, 28; 3, 8]  [2, 1; 880602513409, 1]    p=1761205026817
[2, 36; 3, 4]  [2, 1; 2783138807809, 1]    p=5566277615617
[2, 43; 3, 2]  [2, 1; 39582418599937, 1]    p=79164837199873
[2, 47; 3, 3]  [2, 1; 1899956092796929, 1]    p=3799912185593857
[2, 44; 3, 8]  [2, 1; 57711166318706689, 1]    p=115422332637413377
[2, 19; 3, 26] [2, 1; 666334875701477377, 1]    p=1332669751402954753
[2, 5; 3, 36]  [2, 1; 2401514164751985937, 1]    p=4803028329503971873
[2, 9; 3, 44]  [2, 1; 252101350959004475617537, 1]    p=504202701918008951235073
(......)                                                                                         
[2, 347; 3, 210] [2, 1; 2248236482316792976786964665292968461331995642040323695103
2046780867585152457721177889198712315934156013280843634240215226808653634390879379
03441584820738187206171506901838003018676481262351763229728833537, 1] 
p=44964729646335859535739293305859369226639912840806473902064093561735170304915442
3557783974246318683120265616872684804304536173072687817587580688316964147637441234
3013803676006037352962524703526459457667073
\end{verbatim}

\normalsize
It seems clear that the number of solutions may be infinite (with an exponential growth).

\medskip \noindent
Consider the following program: 

\smallskip\footnotesize
\begin{verbatim}
 {b=10^60+floor(Pi*10^35); forprime(p=b, b+10^3, my(S=0, e=kronecker(-4,p)); 
 F1=factor(2*(p-e)); F2=factor((p+e)/2);
 P=concat(F1[,1],F2[,1]); E=concat(F1[,2],F2[,2]);
 forvec(v=vectorv(# E,i,[0,E[i]]), my(d=factorback(P,v));
 if(d>p, next); if((p-1)%d!=0 && (p+1)%d!=0,
 S+= prod(i=1,# v, if(v[i], (P[i]-1)*P[i]^(v[i]-1),1))));
 Density=S/(p^2-1.0); Delta=S/(p-1.0)-1/3; C= Density*p/log(p);
 print(p,"  ", Density,"  ", Delta,"  ",C))}
\end{verbatim}

\normalsize
\smallskip
Then we obtain, for the inequalities  $\ds \frac{1}{3} \leq 
\frac{1} {p-1} \sm_{D \in {\mathcal D}_p}\phi( D) < c(p) \cdot {\rm log}^2(p)$, 
the following data, showing their great dispersion,
first for some small prime numbers, then for some larger ones, where

$\ds\ \bullet\ \  {\rm Density} := \frac{1} {p^2-1} \sm_{D \in  {\mathcal D}_p}\phi( D)$, 

$\ds\ \bullet\ \  \Delta :=(p+1) \cdot {\rm Density}  - \frac{1}{3} = 
\frac{1} {p-1} \sm_{D \in  {\mathcal D}_p}\phi( D) - \frac{1}{3}$, 

$\ds\ \bullet\ \  C := \frac{p}{{\rm log}(p)} \cdot {\rm Density} \ll c(p) \cdot {\rm log}(p)$:

\footnotesize
$$\begin{array}{cccrrccc}
{\rm prime\  number} \ \,  p   &&                            {\rm Density}        &  \Delta\ \  &    { C\ \ \ } \vspace{0.15cm}  \\ 
112771 &&                                                             1.35   \times 10^{-4}        &     14.9499  & 1.3137 \\ 
112787 &&                                                            3.43  \times 10^{-6}     &           0.0538     & 0.0332  \\
112799  &&                                                           1.03 \times 10^{-4}       &        11.2873  & 0.9989  \\
112807  &&                                                                 2.31 \times 10^{-5} &       2.2715  &    0.2239  \\
112831  &&                                                                     3.48  \times 10^{-5} &     3.5941  &     0.3376 \\
112843  &&                                                                    9.35  \times 10^{-6} &       0.7225  &    0.0907 \\
1000000012345678910111213141516172457  &&         3.39 \times 10^{-37}   &     0.0054   &    0.0040 \\
1000000012345678910111213141516172551   &&     1.13 \times 10^{-34}   &   112.7791  &    1.3645 \\
1000000012345678910111213141516172631  &&       2.02 \times 10^{-35}  &    19.9470   &    0.2446 \\
1000000012345678910111213141516172643  &&       9.88 \times 10^{-37}   &     0.6552    &    0.0119 \\
1000000012345678910111213141516172661  &&        1.69 \times 10^{-35}   &  16.5501  &    0.2036 \\
1000000012345678910111213141516172719  &&      6.83 \times 10^{-35}   &  67.9646   &   0.8239\\
10^{60} + 314159265358979323846264338327950343   &&   1.92 \times 10^{-58} &  192.1709  & 1.3934 \\
10^{60} + 314159265358979323846264338327950499   && 1.43 \times 10^{-59}  & 13.9993   &    0.1037 \\
10^{60} + 314159265358979323846264338327950541  &&  5.64 \times 10^{-59} &  56.0710  &    0.4082 \\
10^{60} + 314159265358979323846264338327950569  &&    7.50 \times 10^{-59}  & 74.6795  &   0.5429 \\
10^{60} + 314159265358979323846264338327950989  &&  2.63 \times 10^{-59} &  26.0318  &   0.1908 \\
10^{60} + 314159265358979323846264338327951201 &&    5.26 \times 10^{-59}  & 52.2864  &   0.3808   
\end{array} $$

\normalsize \medskip
(i) For $p=1000000012345678910111213141516172457$ above, we have:

\smallskip
$C \approx 0.004086$, $C/{\rm log}(p) \approx 4.930 \cdot 10^{-5}$,

$p-1= 2^3\cdot 3^2\cdot 389 \cdot 62528362319 \cdot 571006238831466292903$,

$p+1=2 \cdot 8131511 \cdot 61489187701134445376216864339$.

\smallskip
(ii) For $p=10123456789123456789125887$, we obtain $\Delta  \approx 5.0641\cdot 10^{-23}$,

\smallskip
$C \approx 0.005789$, $C/{\rm log}(p) \approx 10.054 \cdot 10^{-5}$,

$p-1= 2\cdot  5061728394561728394562943$,

$p+1=2^8 \cdot  3 \cdot 13181584360837834360841$.

\medskip
(iii) Large values of $C$ are, on the contrary, obtained when $p^2-1$
is the product of small primes (friable numbers). This may help to precise
the upper bound of $C$ since the local maxima increase slowly.
For instance: 

\medskip
$166676399^2-1 = 2^5 \cdot 3^3 \cdot 5^2 \cdot 7 \cdot 11 \cdot 17
\cdot 19 \cdot 23 \cdot 29 \cdot 31 \cdot 41 \cdot 61$
with $C \approx 41.91845$ and $C/{\rm log}(p) \approx 2.21421$. 

\medskip
$1758415231^2-1 = 2^8 \cdot 3^4 \cdot 5 \cdot 7 \cdot 11 \cdot 13 \cdot 17
\cdot 19 \cdot 29 \cdot 31 \cdot 37 \cdot 47 \cdot 59$
with $C \approx 81.51733$ and $C/{\rm log}(p) \approx 3.82932$. 

\medskip
The following program computes these successive local maxima:

\smallskip
\footnotesize
\begin{verbatim}
{B=10^20; CC=0.0; forprime(p=3, B, my(S=0, e=kronecker(-4,p)); F1=factor(2*(p-e)); 
F2=factor((p+e)/2); P=concat(F1[,1],F2[,1]); E=concat(F1[,2],F2[,2]);
forvec(v=vectorv(#E,i,[0,E[i]]), my(d=factorback(P,v)); if(d>p, next); 
if((p-1)%d!=0 && (p+1)%d!=0, S+= prod(i=1,#v,if(v[i],(P[i]-1)*P[i]^(v[i]-1),1))));
Pr=S/(p^2-1.0); C=Pr*p/log(p); if(C>CC, CC=C; print(p," ",CC," ",CC/log(p))))}

p             CC             CC/log(p)
11            0.1529118768   0.0637692056
19            0.2867929851   0.0974015719
29            0.3690965111   0.1096121427
(......)
604929599    51.9605419985   2.5696806133
1368987049   61.6784084466   2.9318543821
1758415231   81.5173320978   3.8293199014
\end{verbatim}
\normalsize
For $p>1758415231$ the running time becomes prohibitive 
although we may conjecture the infiniteness of these numbers.

\end{document}